\documentclass[11pt]{amsart}

\usepackage{graphics}
\usepackage{color}
\usepackage[a4paper, margin=3cm]{geometry}

\usepackage{amssymb,enumerate}
\usepackage{amsmath}
\usepackage{bbm}           % used for blackboard 1
\usepackage{bm}
\usepackage{eso-pic,graphicx}
\usepackage{tikz}
\usepackage{cite}
\usepackage{esint}
\usepackage[colorlinks=true, pdfstartview=FitV, linkcolor=blue, citecolor=blue, urlcolor=blue]{hyperref}

\usepackage{graphicx}
\usepackage{pslatex}
\usepackage{amsmath,amsfonts}
\usepackage{rotating}
\usepackage{amssymb}
\usepackage{verbatim}
\usepackage{rotating}
\usepackage{mathrsfs}

\makeatletter
\def\Ddots{\mathinner{\mkern1mu\raise\p@
\vbox{\kern7\p@\hbox{.}}\mkern2mu
\raise4\p@\hbox{.}\mkern2mu\raise7\p@\hbox{.}\mkern1mu}}
\makeatother

\def\XXint#1#2#3{{\setbox0=\hbox{$#1{#2#3}{\int}$}
\vcenter{\hbox{$#2#3$}}\kern-.5\wd0}}

\begin{document}
\newtheorem{theorem}{Theorem}
\newtheorem{proposition}[theorem]{Proposition}
\newtheorem{conjecture}[theorem]{Conjecture}
\def\theconjecture{\unskip}
\newtheorem{corollary}[theorem]{Corollary}
\newtheorem{lemma}[theorem]{Lemma}
\newtheorem{claim}[theorem]{Claim}
\newtheorem{sublemma}[theorem]{Sublemma}
\newtheorem{observation}[theorem]{Observation}
\theoremstyle{definition}
\newtheorem{definition}{Definition}
\newtheorem{notation}[definition]{Notation}
\newtheorem{remark}[definition]{Remark}
\newtheorem{question}[definition]{Question}
\newtheorem{questions}[definition]{Questions}
\newtheorem{example}[definition]{Example}
\newtheorem{problem}[definition]{Problem}
\newtheorem{exercise}[definition]{Exercise}
 \newtheorem{thm}{Theorem}
 \newtheorem{cor}[thm]{Corollary}
 \newtheorem{lem}{Lemma}[section]
 \newtheorem{prop}[thm]{Proposition}
 \theoremstyle{definition}
 \newtheorem{dfn}[thm]{Definition}
 \theoremstyle{remark}
 \newtheorem{rem}{Remark}
 \newtheorem{ex}{Example}
 \numberwithin{equation}{section}
%%%%%%%%%%%%% Abbreviation of symbols %%%%%%%%%%%%%%%%
\def\C{\mathbb{C}}
\def\R{\mathbb{R}}
\def\Rn{{\mathbb{R}^n}}
\def\Rns{{\mathbb{R}^{n+1}}}
\def\Sn{{{S}^{n-1}}}
\def\M{\mathbb{M}}
\def\N{\mathbb{N}}
\def\Q{{\mathbb{Q}}}
\def\Z{\mathbb{Z}}
\def\F{\mathcal{F}}
\def\L{\mathcal{L}}
\def\S{\mathcal{S}}
\def\supp{\operatorname{supp}}
\def\essi{\operatornamewithlimits{ess\,inf}}
\def\esss{\operatornamewithlimits{ess\,sup}}
%%%%%%%%%%%%%%%%%%%%%%%%%%%%%%%%%%%%%%%%%%%%%%%%%%%%%

\numberwithin{equation}{section}
\numberwithin{thm}{section}
\numberwithin{theorem}{section}
\numberwithin{definition}{section}
\numberwithin{equation}{section}

\def\earrow{{\mathbf e}}
\def\rarrow{{\mathbf r}}
\def\uarrow{{\mathbf u}}
\def\varrow{{\mathbf V}}
\def\tpar{T_{\rm par}}
\def\apar{A_{\rm par}}

\def\reals{{\mathbb R}}
\def\torus{{\mathbb T}}
\def\scriptm{{\mathcal T}}
\def\heis{{\mathbb H}}
\def\integers{{\mathbb Z}}
\def\z{{\mathbb Z}}
\def\naturals{{\mathbb N}}
\def\complex{{\mathbb C}\/}
\def\distance{\operatorname{distance}\,}
\def\support{\operatorname{support}\,}
\def\dist{\operatorname{dist}\,}
\def\Span{\operatorname{span}\,}
\def\degree{\operatorname{degree}\,}
\def\kernel{\operatorname{kernel}\,}
\def\dim{\operatorname{dim}\,}
\def\codim{\operatorname{codim}}
\def\trace{\operatorname{trace\,}}
\def\Span{\operatorname{span}\,}
\def\dimension{\operatorname{dimension}\,}
\def\codimension{\operatorname{codimension}\,}
\def\nullspace{\scriptk}
\def\kernel{\operatorname{Ker}}
\def\ZZ{ {\mathbb Z} }
\def\p{\partial}
\def\rp{{ ^{-1} }}
\def\Re{\operatorname{Re\,} }
\def\Im{\operatorname{Im\,} }
\def\ov{\overline}
\def\eps{\varepsilon}
\def\lt{L^2}
\def\diver{\operatorname{div}}
\def\curl{\operatorname{curl}}
\def\etta{\eta}
\newcommand{\norm}[1]{ \|  #1 \|}
\def\expect{\mathbb E}
\def\bull{$\bullet$\ }

\def\blue{\color{blue}}
\def\red{\color{red}}

\def\xone{x_1}
\def\xtwo{x_2}
\def\xq{x_2+x_1^2}
\newcommand{\abr}[1]{ \langle  #1 \rangle}

\newcommand{\Norm}[1]{ \left\|  #1 \right\| }
\newcommand{\set}[1]{ \left\{ #1 \right\} }
\newcommand{\ifou}{\raisebox{-1ex}{$\check{}$}}
\def\one{\mathbf 1}
\def\whole{\mathbf V}
\newcommand{\modulo}[2]{[#1]_{#2}}
\def \essinf{\mathop{\rm essinf}}
\def\scriptf{{\mathcal F}}
\def\scriptg{{\mathcal G}}
\def\scriptm{{\mathcal M}}
\def\scriptb{{\mathcal B}}
\def\scriptc{{\mathcal C}}
\def\scriptt{{\mathcal T}}
\def\scripti{{\mathcal I}}
\def\scripte{{\mathcal E}}
\def\scriptv{{\mathcal V}}
\def\scriptw{{\mathcal W}}
\def\scriptu{{\mathcal U}}
\def\scriptS{{\mathcal S}}
\def\scripta{{\mathcal A}}
\def\scriptr{{\mathcal R}}
\def\scripto{{\mathcal O}}
\def\scripth{{\mathcal H}}
\def\scriptd{{\mathcal D}}
\def\scriptl{{\mathcal L}}
\def\scriptn{{\mathcal N}}
\def\scriptp{{\mathcal P}}
\def\scriptk{{\mathcal K}}
\def\frakv{{\mathfrak V}}
\def\C{\mathbb{C}}
\def\D{\mathcal{D}}
\def\R{\mathbb{R}}
\def\Rn{{\mathbb{R}^n}}
\def\rn{{\mathbb{R}^n}}
\def\Rm{{\mathbb{R}^{2n}}}
\def\r2n{{\mathbb{R}^{2n}}}
\def\Sn{{{S}^{n-1}}}
\def\M{\mathbb{M}}
\def\N{\mathbb{N}}
\def\Q{{\mathcal{Q}}}
\def\Z{\mathbb{Z}}
\def\F{\mathcal{F}}
\def\L{\mathcal{L}}
\def\G{\mathscr{G}}
\def\ch{\operatorname{ch}}
\def\supp{\operatorname{supp}}
\def\dist{\operatorname{dist}}
\def\essi{\operatornamewithlimits{ess\,inf}}
\def\esss{\operatornamewithlimits{ess\,sup}}
\def\dis{\displaystyle}
\def\dsum{\displaystyle\sum}
\def\dint{\displaystyle\int}
\def\dfrac{\displaystyle\frac}
\def\dsup{\displaystyle\sup}
\def\dlim{\displaystyle\lim}
\def\bom{\Omega}
\def\om{\omega}
\def\BMO{\rm BMO}
\def\CMO{\rm CMO}
\begin{comment}
\def\scriptx{{\mathcal X}}
\def\scriptj{{\mathcal J}}
\def\scriptr{{\mathcal R}}
\def\scriptS{{\mathcal S}}
\def\scripta{{\mathcal A}}
\def\scriptk{{\mathcal K}}
\def\scriptp{{\mathcal P}}
\def\frakg{{\mathfrak g}}
\def\frakG{{\mathfrak G}}
\def\boldn{\mathbf N}
\end{comment}

\author[S. Wang]{Shifen Wang}
\address{Shifen Wang\\
	School of Mathematical Sciences \\
	Beijing Normal University \\
	Laboratory of Mathematics and Complex Systems \\
	Ministry of Education \\
	Beijing 100875 \\
	People's Republic of China}
\email{wsfrong@mail.bnu.edu.cn}

\author[Q. Xue]{Qingying Xue$^{*}$}
\address{Qingying Xue\\
	School of Mathematical Sciences \\
	Beijing Normal University \\
	Laboratory of Mathematics and Complex Systems \\
	Ministry of Education \\
	Beijing 100875 \\
	People's Republic of China}
\email{qyxue@bnu.edu.cn}

\keywords{bilinear maximal Calder\'on-Zygmund singular integral operators, commutator, compactness.\\
	\indent{\it {2010 Mathematics Subject Classification.}} Primary 42B20,
	Secondary 42B25.}

\thanks{ The second author was supported partly by NNSF of China (Nos. 11671039, 11871101) and NSFC-DFG (No. 11761131002).
	\thanks{$^{*}$ Corresponding author, e-mail address: qyxue@bnu.edu.cn}}

\date{\today}
\title[ on weighted Compactness of Commutators ]
{\bf On weighted Compactness of commutators of bilinear maximal Calder\'on-Zygmund singular integral operators}

\begin{abstract} 
Let $T$ be a bilinear Calder\'on-Zygmund singular integral operator and $T^*$ be its corresponding truncated maximal operator. For any $b\in\BMO(\rn)$ and $\vec{b}=(b_1,\ b_2)\in\BMO(\rn)\times\BMO(\rn)$, let $T^*_{b,j}$ (j=1,2), $T^*_{\vec{b}}\ $ be the commutators in the j-th entry and the iterated commutators of $T^*$, respectively. 
In this paper, for all $1<p_1,p_2<\infty$, $\frac{1}{p}=\frac{1}{p_1}+\frac{1}{p_2}$, we show that $T^*_{b,j}$ and $T^*_{\vec{b}}$ are compact operators from $L^{p_1}(w_1)\times L^{p_2}(w_2)$ to $L^p(v_{\vec{w}})$, if $b,b_1,b_2\in{\rm CMO}(\mathbb{R}^n)$ and $\vec{w}=(w_1,w_2)\in A_{\vec{p}}$, $v_{\vec{w}}=w_1^{p/p_1}w_2^{p/p_2}$. Here ${\rm CMO}(\mathbb{R}^n)$ denotes the closure of $\mathcal{C}_c^\infty(\mathbb{R}^n)$ in the ${\rm BMO}(\mathbb{R}^n)$ topology and $A_{\vec{p}}$ is the multiple weights class.

\end{abstract}\maketitle

\section{Introduction}

\medskip
The study of commutators of singular intergral operator $T$ can be traced back to the  celebrated works of Coifman, Rochberg and Weiss \cite{MR0412721}. Recall that, for a suitable smooth function $f$, the
commutator of $T$ is defined as
$[b,T]f = T(bf) - bT(f)$. In \cite{MR0412721}, it was shown 
that if $b$ belongs to BMO(${\mathbb{R}^{n}}$), then $[b,T] $ is $ L^p({\mathbb{R}^{n}})$ $(1 < p < \infty)$ bounded. Conversely,
if all commutators of Riesz transform $[R_j , b]$ are $L^p$ bounded for $1 \leq j \leq n$, then  $b \in$ BMO(${\mathbb{R}^{n}}$).

In 1978, Uchiyama \cite{Uch} 
first studied the compactness of commutators of  a singular integral operator 
with the kernel $\Omega\in {\rm Lip}_1({\rm S}^{n-1})$ defined by 
 $$T_{\Omega}f(x)=\textrm{p.v.}\int_{\mathbb{R}^n}\frac{\Omega(y/|y|)}{|y|^n}f(x-y)dy.$$ 
He showed that the commutator $[b,T_\Omega]$ 
is compact on $L^p(\mathbb{R}^n)$ for $1<p<\infty$ if and only if 
$b\in{\rm CMO}(\mathbb{R}^n)$, where ${\rm CMO}(\mathbb{R}^n)$ 
is the closure of $\mathcal{C}_c^\infty(\mathbb{R}^n)$ in the ${\rm BMO}(\mathbb{R}^n)$ 
topology. 

Since then, the study on the compactness of commutators of different operators 
has attracted much more attention. For examples, the compactness of commutators of the linear Fourier multipliers and pseudodifferential
operators was considered by Cordes\cite {Co}. Beatrous and Li \cite{BL} studied the boundedness and compactness of the commutators of Hankel type operators. Krantz and Li \cite{KL1}, \cite{KL2} applied the compactness characterization 
of the commutator $[b,T_\Omega]$ to study Hankel type operators on Bergman space.  Wang\cite{W} showed that the commutators 
of fractional integral operator are compact form $L^p(\rn)$ to $L^q(\rn)$.  In 2009, Chen and Ding 
\cite{CD1} proved thar the commutator of singular integrals with variable kernels is compact 
on $L^p(\rn)$ if and only if $b\in\CMO(\mathbb{R}^n)$ and they also establised the compactness of
Littlewood-Paley square functions in \cite{CD2}. Later on, Chen, Ding and Wang \cite{CDW} 
obtained the compactness of commutators for Marcinkiewicz Integral in Morrey Spaces. 
Recently, Liu, Wang and Xue \cite{LWX} showed the compactness of the commutator of oscillatory singular integrals with non-convolutional type kernels.

In order to state more results, we need to give some definitions and notions. Let $K(x,y,z)$ be a locally integrable function defined
away from the diagonal $x=y=z$ in $\mathbb{R}^{3n}$ and satisfy the conditions as follows:

\begin{enumerate}
	
	\item [\rm{(i)}] \noindent \textbf{The size condition}:  $
	|K(x,y,z)|\leq{A}{(|x-y|+|x-z|)^{-2n}};
	$
	\item [\rm{(ii)}] \noindent \textbf{The regularity conditions}:
	for  $\gamma>0$,  
	$$\indent \quad
	|K(x,y,z)-K_(x',y,z)|\leq \frac{A|x-x'|^{\gamma}}{(|x-y|+|x-z|)^{2n+\gamma}},
	\hbox{\ whenever \ } |x-x'|\leq\frac{1}{2}\max\{|x-y|,|x-z|\};$$
	
	$$\indent \quad
	|K(x,y,z)-K_(x,y',z)|\leq \frac{A|y-y'|^{\gamma}}{(|x-y|+|x-z|)^{2n+\gamma}},
	\hbox{\ whenever \ }|y-y'|\leq\frac{1}{2}\max\{|x-y|,|x-z|\};$$

	$$\indent \quad
	|K(x,y,z)-K_(x,y,z')|\leq \frac{A|z-z'|^{\gamma}}{(|x-y|+|x-z|)^{2n+\gamma}},
	\hbox{\ whenever \ } |z-z'|\leq\frac{1}{2}\max\{|x-y|,|x-z|\}.$$
\end{enumerate}

Let $T$ be the following bilinear singular integral operator
\begin{equation*}
	T(f,g)(x)=\int_{\r2n}K(x,y,z)f(y)g(z)dydz,\quad \hbox{for \ all\ }  x\notin\supp(f)\cap\supp(g),
\end{equation*}
and define the corresponding truncated maximal bilinear singular integral operator $T^*$ by
\begin{equation*}\label{1.3}
	T^*(f,g)(x)=\sup_{\delta>0}\Big|\iint_{|x-y|+|x-z|>\delta}K(x,y,z)f(y)g(z)dydz\Big|.
\end{equation*}
Given $\vec{b}=(b_1,b_2)\in\BMO(\rn)\times\BMO(\rn)$ and $b\in{\rm BMO(\rn)}$, the iterated commutator of $T$ is defined by $$T_{\vec{b}}(f,g)=[b_2,[b_1,T]_1]_2(f,g),$$
and the commutators in the $j$-th entry are defined by
$$[b,T]_1(f,g)=bT(f,g)-T(bf,g) \quad\hbox { and \ \ }[b,T]_2(f,g)=bT(f,g)-T(f,bg).$$
Similarly, the commutators  of $T^*$ in the $j$-th entry are defined by
\begin{equation}\label{1.4}
	T^*_{b,1}(f,g)(x)=\sup_{\delta>0}\Big|\iint_{|x-y|+|x-z|>\delta}K(x,y,z)(b(x)-b(y))f(y)g(z)dydz\Big|,
\end{equation}
\begin{equation}\label{1.5}
	T^*_{b,2}(f,g)(x)=\sup_{\delta>0}\Big|\iint_{|x-y|+|x-z|>\delta}K(x,y,z)(b(x)-b(z))f(y)g(z)dydz\Big|.
\end{equation}
The iterated commutators of $T^*$ is defined by
\begin{align}\label{1.6}
	\begin{split}
		&T^*_{\vec{b}}(f,g)(x)\\
		&=\sup_{\delta>0}\Big|\iint_{|x-y|+|x-z|>\delta}K(x,y,z)(b_1(x)-b_1(y))
		(b_2(x)-b_2(z))f(y)g(z)dydz\Big|.
	\end{split}
\end{align}
It is known that the commutators of bilinear singular integral operator were first defined  and studied by P\'{e}rez and Torres \cite{PT}. In 2009, Lerner et. al. \cite{LOPTT} established the weighted estimates for multilinear operators $T$ and its corresponding iterated commutators $T_{\vec{b}}$ with weight  $\vec{w}\in A_{\vec{p}}$. As for $T^*$, in \cite{C}, Chen gave the weighed $L^p$-estimate for $T^*$ with $A_{\vec{p}}$ weights (see definition \ref{def2.1}). In \cite{X}, Xue proved the following weighted strong type estimate for $T^*_{(b_1,b_2)}$ with $A_{\vec{p}}$ weights.

\quad\hspace{-20pt}{\bf Theorem A} (\cite{X}). {\it  Let  $\vec{w}:=(w_1,w_2)\in A_{\vec{p}}$, $w:=w_1^{p/p_1}w_2^{p/p_2}$, $\frac{1}{p}=\frac{1}{p_1}+\frac{1}{p_2}$ with $1<p_j<\infty$, and $b_j\in\BMO(\rn),\ j=1,2$. Then there is a constant $C>0$ such that
	\begin{equation*}
		\|T^*_{(b_1,b_2)}(f,g)\|_{L^p(w)}\leq C\prod_{j=1}^{2}\|b_j\|_{\BMO}\|f\|_{L^{p_1}(w_1)}\|g\|_{L^{p_2}(w_2)}.
	\end{equation*}
}

\begin{remark}
Using the method of \cite{X}, we can also get the weighted boundedness of $T^*_{b,j},\ j=1,2$.
\begin{equation*}
\|T^*_{b,j}(f,g)\|_{L^p(v_{\vec{w}})}\leq C\|b\|_{\BMO}\|f\|_{L^{p_1}(w_1)}\|g\|_{L^{p_2}(w_2)},
\end{equation*}
with $\vec{w}\in A_{\vec{p}}$, $\frac{1}{p}=\frac{1}{p_1}+\frac{1}{p_2}$, $1<p_j<\infty,\ j=1,2$.
\end{remark}
\smallskip

It is quite natural to consider the compactness of commutators of bilinear operators. B\'enyi and Torres \cite{BT} first showed that  commutators of bilinear Calder\'on-Zygmund operators with multiplication by $\CMO(\rn)$ functions are compact operators from $L^{p_1}\times L^{p_2}$ to $L^p$ for $1<p_1,p_2<\infty$ and $1/p=1/p_1+1/p_2$. Later on, B\'enyi et. al. \cite{BDMT1} gave the following weigthed compactness:
\medskip

\quad\hspace{-20pt}{\bf Theorem B} (\cite{BDMT1}). {\it Suppose $1\leq p_1,p_2<\infty, 1<p<\infty$ with $\frac{1}{p}=\frac{1}{p_1}+\frac{1}{p_2}$, $b\in\CMO(\rn)$, $\vec{b}\in\CMO(\rn)\times\CMO(\rn)$, and $\vec{w}=(w_1,w_2)\in A_p\times A_p$, $v_{\vec{w}}=w_1^{p/p_1}w_2^{p/p_2}$. Then $[b,T]_1$, $[b,T]_2$ and $T_{\vec{b}}$ are compact operators from $L^{p_1}(w_1)\times L^{p_2}(w_2)$ to $L^p(v_{\vec{w}})$.}
\medskip

The compactness for commutators of bilinear fractional integrals were also given in \cite{BDMT}.  
Since then, this subject has attracted much attention. We refer the reader to \cite{DMX,XYY,RC,TX,TXYY} for the compactness of commutators of multilinear operators. Very recently, extrapolation for linear and multilinear compact operators with applications were given by Cao et. al in \cite{COY1, COY2}, and Hyt\"onen et. al. in \cite{H1,H2,H3}. It should be pointed out that all the above results hold for linear not sublinear opertors.

In \cite{BDMT1}, it is pointed out that $A_p\times A_p\subsetneq A_{\vec{p}}$. Hence, a natural question arises:
\begin{question}
Does the compactness result in Theorem B still hold for  multiple weights class $A_{\vec{p}}$?
\end{question}
Recently, Bu and Chen \cite{RC} studied this question and they showed that the commutators of bilinear singular integral operators are compact from $L^{p_1}(w_1)\times L^{p_2}(w_2)$ to $L^p(v_{\vec{w}})$ when $\vec{w}=(w_1,w_2)\in A_{\vec{p}}$, $v_{\vec{w}}\in A_p$ for $1<p<\infty$, $1<p_1,\ p_2<\infty$ with $\frac{1}{p}=\frac{1}{p_1}+\frac{1}{p_2}$. However, they need to assume $v_{\vec{w}}\in A_p$, which leaves this question still open. Moreover, it seems that the condition $p>1$ is not natural, since the strong boundedness holds for $p>1/2.$

Now we turn to the the commutators of bilinear maximal Calderón-Zygmund singular integral operators. In \cite{DMX}, Ding, Mei and Xue obtained the following compactness result:
\medskip

\quad\hspace{-20pt}{\bf Theorem C} (\cite{DMX}). {\it Let $1<p<\infty$, $1<p_1,p_2<\infty$ with $\frac{1}{p}=\frac{1}{p_1}+\frac{1}{p_2}$. If $b\in\CMO(\rn)$, $\vec{b}\in\CMO(\rn)\times\CMO(\rn)$. 
Then the operators $T^*_{b,1}$, $T^*_{b,2}$ and $T^*_{\vec{b}}$ defined by \eqref{1.4}-\eqref{1.6} are compact operators from $L^{p_1}(\rn)\times L^{p_2}(\rn)$ to $L^p(\rn)$.
}
\medskip

Inspired by the above results, one may further ask the following questions:
\begin{question}
Are the operators  $T^*_{b,1}$, $T^*_{b,2}$ and $T^*_{\vec{b}}$ compact from  $L^{p_1}(w_1)\times L^{p_2}(w_2)$ to $L^p(v_{\vec{w}})$ when $\vec{w}$ belongs to $A_{\vec{p}}$ for $1/2<p<\infty.$? 
\end{question}
For $T^*$, the difficulty lies in that, $T^*$ is a sublinear opeator, and the proof in Theorem C relies heavily on the properties of translation invariat operators. However, when it comes to the case of weighted compactness, the weights are not translation invariat. Therefore, these questions become
more difficult and subtle to deal with.

The main purpose of this paper is to give a firm answer to these questions. Smooth truncated techniques will play an essential role in solving these questions.

We summarize our results as follows:

\begin{theorem}\label{thm1.1}
Let $\frac{1}{2}<p<\infty$, $1<p_1,\ p_2<\infty$ with $\frac{1}{p}=\frac{1}{p_1}+\frac{1}{p_2}$  and $\vec{w}=(w_1,w_2)\in A_{\vec{p}}$, $v_{\vec{w}}=w_1^{p/p_1}w_2^{p/p_2}$. If $b\in\CMO(\rn)$,
then the operators $T^*_{b,1}$ and $T^*_{b,2}$ defined by \eqref{1.4} and \eqref{1.5} are compact operators from $L^{p_1}(w_1)\times L^{p_2}(w_2)$ to $L^p(v_{\vec{w}})$.
\end{theorem}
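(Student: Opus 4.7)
The plan is to verify, for $b\in C_c^\infty(\rn)$, the hypotheses of the weighted Kolmogorov-Riesz-Fr\'echet compactness criterion for the set
\[
\mathcal F=\{T^*_{b,j}(f,g):\|f\|_{L^{p_1}(w_1)}\le 1,\ \|g\|_{L^{p_2}(w_2)}\le 1\}\subset L^p(v_{\vec w}),
\]
and then to pass to general $b\in\CMO(\rn)$ by density. The density reduction is standard: the Remark supplies the sublinear estimate
\[
\|T^*_{b,j}(f,g)-T^*_{b',j}(f,g)\|_{L^p(v_{\vec w})}\le C\|b-b'\|_{\BMO}\|f\|_{L^{p_1}(w_1)}\|g\|_{L^{p_2}(w_2)},
\]
so any approximating sequence $b^{(k)}\to b$ in $\BMO$ with $b^{(k)}\in C_c^\infty(\rn)$ gives $T^*_{b^{(k)},j}\to T^*_{b,j}$ in operator norm, and compactness is preserved under uniform operator-norm limits.

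For $b\in C_c^\infty(\rn)$, three properties must be established: (i) uniform $L^p(v_{\vec w})$-boundedness of $\mathcal F$, immediate from the Remark; (ii) uniform decay at infinity, $\lim_{R\to\infty}\sup_{\mathcal F}\int_{|x|>R}|T^*_{b,j}(f,g)|^p v_{\vec w}\,dx=0$; and (iii) uniform translation continuity, $\lim_{|h|\to 0}\sup_{\mathcal F}\|\tau_h T^*_{b,j}(f,g)-T^*_{b,j}(f,g)\|_{L^p(v_{\vec w})}=0$. Condition (ii) will be obtained by exploiting that $\supp b$ is compact: for $|x|$ much larger than the diameter of $\supp b$, the factor $b(x)-b(\cdot)$ forces the integration in $y$ (resp.\ $z$) to be restricted to $\supp b$, so the size condition on $K$ produces a power decay of $|T^*_{b,j}(f,g)(x)|$ in $|x|$ which, when integrated against $v_{\vec w}$, absorbs the at-most-polynomial growth of the $A_\infty$ weight $v_{\vec w}$.

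The main obstacle is condition (iii). Because $T^*$ is only sublinear and $v_{\vec w}$ is not translation invariant, the usual linear strategies (translate the kernel and the weight together, then use duality) are unavailable. The key device will be a smooth truncation: fix $\eta\in C^\infty(\R)$ with $\eta\equiv 0$ on $(-\infty,1]$ and $\eta\equiv 1$ on $[2,\infty)$, and introduce the smoothly truncated maximal commutator
\[
\widetilde{T^*_{b,j}}(f,g)(x)=\sup_{\delta>0}\Bigl|\iint K(x,y,z)\,\eta\!\Bigl(\tfrac{|x-y|+|x-z|}{\delta}\Bigr)(b(x)-b(y_j))f(y)g(z)\,dy\,dz\Bigr|,
\]
with $y_1=y$ and $y_2=z$. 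Two complementary estimates then drive the argument. First, the pointwise difference $|T^*_{b,j}(f,g)-\widetilde{T^*_{b,j}}(f,g)|$ will be dominated by an annular bilinear commutator maximal whose $L^p(v_{\vec w})$-norm is made arbitrarily small by a further $C_c^\infty$-mollification of $b$. Second, linearizing the supremum via a measurable selection $\delta=\delta(x)$ and using that $K(x,y,z)\eta\bigl((|x-y|+|x-z|)/\delta\bigr)$ is $C^1$ in $x$ with gradient dominated by $C(|x-y|+|x-z|)^{-2n-1}$ uniformly in $\delta$, the mean-value theorem and the weighted bilinear $L^p$-bound for the resulting Calder\'on-Zygmund-type maximal will yield a quantitative H\"older-in-$h$ estimate for $\|\tau_h\widetilde{T^*_{b,j}}(f,g)-\widetilde{T^*_{b,j}}(f,g)\|_{L^p(v_{\vec w})}$.

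The technical heart lies in carrying these estimates through the full multilinear weight class $A_{\vec p}$, which strictly contains $A_p\times A_p$, and for the extended exponent range $1/2<p<\infty$. The latter is delicate because when $p<1$ one cannot appeal to linear duality, and the argument must proceed through multilinear sharp-maximal or extrapolation-style estimates adapted to $A_{\vec p}$, together with the weighted bound for $T^*_{b,j}$ recorded in the Remark. Once (i)--(iii) are in place for $b\in C_c^\infty(\rn)$, the weighted Kolmogorov-Riesz criterion yields precompactness of $\mathcal F$ in $L^p(v_{\vec w})$, and the density reduction then completes the proof of Theorem \ref{thm1.1}.
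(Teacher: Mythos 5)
There is a genuine gap at the heart of your treatment of the equicontinuity condition (iii), and it comes from where you place the smooth cutoff. Your truncation acts in the maximal parameter $\delta$: you replace $\chi_{\{|x-y|+|x-z|>\delta\}}$ by $\eta\bigl((|x-y|+|x-z|)/\delta\bigr)$. The difference between $T^*_{b,j}$ and $\widetilde{T^*_{b,j}}$ is then a maximal operator over the annuli $\{\delta<|x-y|+|x-z|\le 2\delta\}$ with the supremum taken over \emph{all} $\delta>0$; on large annuli the commutator factor $b(x)-b(y_j)$ is only bounded by $2\|b\|_\infty$, so this difference operator has norm comparable to $\|b\|_\infty$ and is not small --- and no further mollification of $b$ can help, since the obstruction is the size of $b$, not its smoothness (test against bumps of $f,g$ concentrated at distance $\sim\delta_0$ from $x$ and take $\delta$ just below the relevant scale: the difference is comparable to the full operator). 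Hence your first ``complementary estimate'' fails, you cannot transfer equicontinuity from $\widetilde{T^*_{b,j}}$ back to $T^*_{b,j}$, and the genuinely sublinear difficulty --- the sharp indicator $\chi_{\{|x-y|+|x-z|>\delta\}}$ moving with $x$ --- is left untouched. The paper's truncation is different in a decisive way: it removes the kernel near the diagonal at a \emph{fixed} small scale $\eta$, via $K_\eta=K\bigl[1-\varphi\bigl(\tfrac{2}{\eta}(|x-y|+|x-z|)\bigr)\bigr]$. Then $T^*_{b,1}-T^*_{\eta,b,1}$ only involves the region $|x-y|+|x-z|\le\eta$, where $|b(x)-b(y)|\le\eta\|\nabla b\|_\infty$, giving the pointwise bound $C\eta\,\mathcal{M}(f,g)(x)$ --- that is where the smallness actually comes from. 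Moreover $K_\eta$ vanishes for $|x-y|+|x-z|<\eta/4$, so in the equicontinuity step only scales $\gtrsim\eta$ occur; the indicator-difference term is then supported in thin annuli $\tfrac{\delta}{1+2\epsilon}\le|x-y|+|x-z|\le\delta$ (and its mirror), and is controlled by Lemma \ref{lem2.3}, H\"older with $1<r<\min\{p_1,p_2\}$, and $\mathcal{M}(|f|^r,|g|^r)$; your scheme has no substitute for this step.

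Two further points would need repair even if the main gap were fixed. First, $K$ is only $\gamma$-H\"older in $x$ (the regularity condition), not $C^1$, so the mean-value/gradient argument you invoke is unavailable; one must use the H\"older difference, and the near-diagonal region $|x-y|+|x-z|\lesssim|h|$, where that estimate does not apply, has to be treated separately (there the Lipschitz commutator factor saves you, but this must be said --- in the paper it is unnecessary because $K_\eta$ vanishes there). Second, condition (ii) cannot be run on pointwise power decay alone: for $T^*_{b,1}$ the $z$-integration is unrestricted, and the naive bound using only doubling of $w_2^{1-p_2'}$ need not be summable over dyadic scales; the paper's Step II splits dyadically in both $|x|$ and $|z|$, uses the $A_{\vec p}$ condition on the large balls together with the reverse-doubling estimate of Lemma \ref{lem2.1}(iii) for $w_1^{1-p_1'}$, and this is what produces the summable small factor $(R/N)^{n\theta/p_1'}$. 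By contrast, your density reduction via the Remark, the use of the weighted Fr\'echet--Kolmogorov criterion, and the role of the bilinear maximal function are in line with the paper; also note that the range $p<1$ needs no extrapolation or sharp-maximal machinery --- the paper handles it simply with $(\sum_l a_l)^p\le\sum_l a_l^p$ and Lemma \ref{lem2.2}.
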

\begin{theorem}\label{thm1.2} Let  $\frac{1}{2}<p<\infty$, $1<p_1,\ p_2<\infty$ with $\frac{1}{p}=\frac{1}{p_1}+\frac{1}{p_2}$  and $\vec{w}=(w_1,w_2)\in A_{\vec{p}}$, $v_{\vec{w}}=w_1^{p/p_1}w_2^{p/p_2}$. If $\vec{b}\in\CMO(\rn)\times\CMO(\rn)$, 
then the operator $T^*_{(b_1,b_2)}$ defined by \eqref{1.6} is a compact operator from $L^{p_1}(w_1)\times L^{p_2}(w_2)$ to $L^p(v_{\vec{w}})$. 
\end{theorem}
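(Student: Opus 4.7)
The strategy is to combine a density reduction in $\CMO$ with a weighted bilinear Fr\'echet--Kolmogorov characterization of compactness in $L^p(v_{\vec{w}})$, using a smooth truncation of the kernel to tame the supremum defining $T^*$. For the density step, pick sequences $b_j^k\in C_c^\infty(\rn)$ with $\|b_j-b_j^k\|_{\BMO}\to 0$. Adding and subtracting $b_j^k$ in each of the two factors of $(b_1(x)-b_1(y))(b_2(x)-b_2(z))$ and invoking sublinearity of the supremum defining $T^*$ yields the pointwise inequality
\begin{align*}
\bigl|T^*_{(b_1,b_2)}(f,g)-T^*_{(b_1^k,b_2^k)}(f,g)\bigr|&\le T^*_{(b_1-b_1^k,b_2^k)}(f,g)+T^*_{(b_1^k,b_2-b_2^k)}(f,g)\\
&\quad +T^*_{(b_1-b_1^k,b_2-b_2^k)}(f,g).
\end{align*}
By Theorem A the right-hand side converges to zero in $L^{p_1}(w_1)\times L^{p_2}(w_2)\to L^p(v_{\vec{w}})$ operator norm; since a uniform limit of compact sublinear operators is compact, it suffices to establish the theorem under the additional assumption $b_1,b_2\in C_c^\infty(\rn)$.

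For such $b_j$ I would apply a weighted bilinear Fr\'echet--Kolmogorov criterion: relative compactness of the image of the bi-unit ball $\{(f,g):\|f\|_{L^{p_1}(w_1)}\le 1,\ \|g\|_{L^{p_2}(w_2)}\le 1\}$ follows from (i) uniform boundedness, (ii) uniform smallness of the tail $\|T^*_{(b_1,b_2)}(f,g)\chi_{\{|x|>R\}}\|_{L^p(v_{\vec{w}})}$ as $R\to\infty$, and (iii) uniform equicontinuity $\|T^*_{(b_1,b_2)}(f,g)(\cdot+h)-T^*_{(b_1,b_2)}(f,g)(\cdot)\|_{L^p(v_{\vec{w}})}\to 0$ as $|h|\to 0$. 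Item (i) is precisely Theorem A. For (ii), the compact support of $b_1,b_2$ in some $B(0,R_0)$ forces at least one of $|y|\le R_0$ or $|z|\le R_0$ whenever the integrand is nonzero, so for $|x|\ge 2R_0$ one has $|x-y|+|x-z|\gtrsim|x|$; the size bound $|K|\lesssim(|x-y|+|x-z|)^{-2n}$ together with H\"older's inequality and the $A_{\vec{p}}$ hypothesis on the weights yields the required decay in $|x|$.

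The main obstacle is (iii): unlike in Theorem C, the weight $v_{\vec{w}}$ is not translation invariant, and the supremum over $\delta$ in $T^*$ obstructs a direct translation estimate. The plan is a two-scale smooth truncation: fix $\varphi\in C_c^\infty(\R)$ with $\varphi\equiv 1$ on $[-1,1]$ and $\supp\varphi\subset[-2,2]$, and for $\eta>0$ split
$$(b_1(x)-b_1(y))(b_2(x)-b_2(z))K(x,y,z)=\Phi_\eta^{\rm in}+\Phi_\eta^{\rm out}$$
according to the factor $\varphi((|x-y|+|x-z|)/\eta)$. For $\Phi_\eta^{\rm in}$, the Lipschitz bounds $|b_j(x)-b_j(y)|\lesssim|x-y|$ and $|b_j(x)-b_j(z)|\lesssim|x-z|$ combined with the size estimate on $K$ produce a non-singular kernel controlled by $(|x-y|+|x-z|)^{2-2n}\chi_{\{|x-y|+|x-z|\le 2\eta\}}$; a dyadic decomposition then dominates the corresponding operator pointwise by $C\eta^2\mathcal{M}(f,g)(x)$, where $\mathcal{M}$ is the bilinear Hardy--Littlewood maximal operator, whose $A_{\vec{p}}$-boundedness forces the $L^p(v_{\vec{w}})$-contribution of this piece and of its $h$-translate to be $\lesssim\eta^2$, arbitrarily small once $\eta$ is fixed small. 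With $\eta$ then fixed, sublinearity of $\sup$ lets one pass the translation difference inside the outer integrand; the regularity $|K(x+h,y,z)-K(x,y,z)|\lesssim|h|^\gamma(|x-y|+|x-z|)^{-2n-\gamma}$ valid on $|x-y|+|x-z|\ge\eta$, together with $|b_j(x+h)-b_j(x)|\lesssim|h|$ and the smoothness of the cutoff, produces a difference kernel bounded by $C_\eta|h|^\gamma(|x-y|+|x-z|)^{-2n-\gamma}$; a final application of Theorem A (and its variant from the remark for $T^*_{b,j}$) to a smoothly truncated maximal version yields a $L^p(v_{\vec{w}})$-estimate of order $C_\eta|h|^\gamma$, which tends to $0$ as $|h|\to 0$. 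The delicate point throughout is that the two-scale smooth truncation is what converts the sublinearity/translation-invariance obstruction into a pointwise bilinear-maximal estimate, for which the full $A_{\vec{p}}$-theory is available and the non-translation-invariance of $v_{\vec{w}}$ becomes harmless.
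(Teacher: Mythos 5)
Your overall strategy coincides with the paper's (CMO density via Theorem A, a smooth truncation at scale $\eta$ dominated by $\eta^2\mathcal{M}(f,g)$, and the weighted Fr\'echet--Kolmogorov criterion), but the equicontinuity step has a genuine gap: you never deal with the sharp truncation $\chi_{\{|x-y|+|x-z|>\delta\}}$ sitting inside the supremum that defines $T^*$. Writing $|\sup_\delta A_\delta(x+h)-\sup_\delta A_\delta(x)|\le\sup_\delta|A_\delta(x+h)-A_\delta(x)|$, the integrand difference at fixed $\delta$ contains, besides the smooth kernel/symbol differences you estimate by $C_\eta|h|^{\gamma}(|x-y|+|x-z|)^{-2n-\gamma}$, the term $K_\eta(x,y,z)\,a(x+h,y,z)\bigl(\chi_{\{|x+h-y|+|x+h-z|>\delta\}}-\chi_{\{|x-y|+|x-z|>\delta\}}\bigr)$, which is not small pointwise and admits no H\"older-type bound in $h$: it lives on thin annuli $\tfrac{\delta}{1+2\epsilon}\le|x-y|+|x-z|\le\delta$ and $\delta\le|x-y|+|x-z|\le\tfrac{\delta}{1-2\epsilon}$ where $|K_\eta|\sim\delta^{-2n}$. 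This is exactly the part that makes the maximal (sublinear) case harder than Theorem B, and the paper devotes the terms $I_{22},J_{22}$ to it, using the annulus measure estimates of Lemma \ref{lem2.3} together with H\"older's inequality with an exponent $1<r<\min\{p_1,p_2\}$ to obtain a bound $C\epsilon^{1/r'}\bigl[\mathcal{M}(|f|^r,|g|^r)(x)\bigr]^{1/r}$, uniform in $\delta$, and then the weighted bound for $\mathcal{M}$ on $L^{p_1/r}(w_1)\times L^{p_2/r}(w_2)$. Without an argument of this type your estimate of the translation difference of the outer piece does not go through.

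A second, smaller flaw is the claim that the inner piece ``and its $h$-translate'' contribute $\lesssim\eta^2$ in $L^p(v_{\vec{w}})$. The translate is bounded by $\eta^2\mathcal{M}(f,g)(x+h)$, and since $v_{\vec{w}}$ is not translation invariant and $\mathcal{M}(f,g)(\cdot+h)$ is not pointwise dominated by $\mathcal{M}(f,g)(\cdot)$, its $L^p(v_{\vec{w}})$-norm is not controlled uniformly over the unit balls. The paper avoids this by ordering the argument differently: it first uses the bound $|T^*_{\vec{b}}(f,g)-T^*_{\eta,\vec{b}}(f,g)|\le C\eta^2\mathcal{M}(f,g)$ to approximate $T^*_{\vec{b}}$ in operator norm by the smoothly truncated maximal operator $T^*_{\eta,\vec{b}}$, and only then verifies the Fr\'echet--Kolmogorov conditions for $T^*_{\eta,\vec{b}}$ itself, so that all pointwise bounds ($T^*_\eta$, $T^*_{\eta,b_j,j}$, $\mathcal{M}$) are evaluated at $x$, never at $x+h$. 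Restructuring your proof in this way, and adding the annulus/H\"older treatment of the truncation mismatch, would bring it in line with a complete argument.
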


Using the general weighted version of Frechet-Kolmogorov theorems (Lemma \ref{lem2.4}), it is easy to see that the condition $v_{\vec{w}}\in A_p$ in Theorem C can be removed. Moreover, the same reasoning as in the proof of Theorem \ref{thm1.1} of this paper shows that the index $1<p<\infty$ can be extended to $1/2<p<\infty$. Therefore. we obtain the following corollary, which gives a firm answer to Question 1.2.

\begin{cor}Let $1/2<p<\infty,$ $1< p_1,p_2<\infty,$ with $\frac{1}{p}=\frac{1}{p_1}+\frac{1}{p_2}$ and $b\in\CMO(\rn)$, $\vec{b}\in\CMO(\rn)\times\CMO(\rn)$, and $\vec{w}=(w_1,w_2)\in A_{\vec{p}}$, $v_{\vec{w}}=w_1^{p/p_1}w_2^{p/p_2}$. Then $[b,T]_1$, $[b,T]_2$ and $T_{\vec{b}}$ are compact operators from $L^{p_1}(w_1)\times L^{p_2}(w_2)$ to $L^p(v_{\vec{w}})$.\end{cor}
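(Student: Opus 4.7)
The plan is to derive the corollary from Theorems \ref{thm1.1} and \ref{thm1.2} by combining pointwise domination of the linear commutators by their truncated maximal versions with the general weighted Fréchet--Kolmogorov criterion (Lemma \ref{lem2.4}). Recall that the criterion reduces precompactness of a family $\mathcal{F}\subset L^p(v_{\vec{w}})$ to three conditions: uniform boundedness in $L^p(v_{\vec{w}})$, uniform smallness outside a large ball, and equicontinuity under translations. I will verify these three conditions for the images of the closed bilinear unit ball under $[b,T]_1$, $[b,T]_2$ and $T_{\vec{b}}$.

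First I would record the trivial pointwise inequalities
\[
|[b,T]_j(f,g)(x)|\leq T^*_{b,j}(f,g)(x),\qquad |T_{\vec{b}}(f,g)(x)|\leq T^*_{\vec{b}}(f,g)(x),
\]
which hold wherever the principal values exist. By Theorems \ref{thm1.1} and \ref{thm1.2}, the images of the bilinear unit ball under $T^*_{b,j}$ and $T^*_{\vec{b}}$ are precompact in $L^p(v_{\vec{w}})$ for all $\frac12<p<\infty$; in particular they satisfy the three Fréchet--Kolmogorov conditions. The pointwise bounds transfer the uniform $L^p(v_{\vec{w}})$-boundedness and the uniform smallness outside any large ball of radius $R$ verbatim to the linear commutators. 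Two of the three hypotheses of Lemma \ref{lem2.4} are thus handled without further work.

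It remains to establish the translation equicontinuity
\[
\lim_{|h|\to 0}\sup_{\substack{\|f\|_{L^{p_1}(w_1)}\leq 1\\ \|g\|_{L^{p_2}(w_2)}\leq 1}}\bigl\|[b,T]_j(f,g)(\cdot+h)-[b,T]_j(f,g)(\cdot)\bigr\|_{L^p(v_{\vec{w}})}=0,
\]
and the analogous statement for $T_{\vec{b}}$. This step is the main obstacle, because pointwise domination by $T^*_{b,j}$ does not control the translation difference. The remedy is to run the same smooth-truncation scheme used in Theorems \ref{thm1.1} and \ref{thm1.2}: approximate the $\CMO$ symbols $b,b_1,b_2$ by $\mathcal{C}_c^\infty(\rn)$ functions, replace $K$ by a smoothly truncated kernel $K_\delta$, and decompose the translation difference into standard pieces controlled by the size/regularity hypotheses on $K$, the smallness of $|h|$ and $\delta$, and the $A_{\vec{p}}$-weighted boundedness provided by Theorem A together with its Remark. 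Since the linear commutators carry no outer supremum over the truncation parameter, each piece of the decomposition is strictly simpler than its counterpart for $T^*_{b,j}$ and $T^*_{\vec{b}}$, so the argument transplants to the full range $\frac12<p<\infty$ (working with $|\cdot|^p$ rather than the norm when $p<1$).

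Combining the three verified conditions, Lemma \ref{lem2.4} yields precompactness of the image of the bilinear unit ball in $L^p(v_{\vec{w}})$, which is the desired compactness of $[b,T]_1$, $[b,T]_2$ and $T_{\vec{b}}$. The removal of Bu--Chen's auxiliary assumption $v_{\vec{w}}\in A_p$ is automatic: Lemma \ref{lem2.4} only demands that the target weight come from a multiple-weight class, and the extension from $1<p<\infty$ to $\frac12<p<\infty$ comes for free from the same reasoning used in Theorem \ref{thm1.1}.
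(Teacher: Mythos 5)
Your proposal is correct and takes essentially the same route as the paper: the paper's justification of the corollary is precisely to rerun the smooth-truncation argument of Theorem \ref{thm1.1} for the linear commutators with the weighted Fr\'echet--Kolmogorov criterion (Lemma \ref{lem2.4}), which is exactly your equicontinuity step, and as you note the absence of the supremum in $\delta$ makes the $I_{22}$-type terms disappear. Your shortcut of getting conditions (i)--(ii) from pointwise domination by $T^*_{b,j}$ and $T^*_{\vec b}$ is a harmless variant, provided that in the final assembly all three conditions are checked for the same truncated, smooth-symbol family and the passage back to $[b,T]_j$, $T_{\vec b}$ is made through operator-norm approximation, as in the paper.
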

\medskip

This paper is organized as follows. In section \ref{S2}, we give some definitions and preliminary lemmas, which are the main ingredients of our proofs. Section \ref{S3} will be devoted to give the proof of Theorem \ref{thm1.1} and Theorem \ref{thm1.2} via smooth truncated techniques. 

Throughout the paper,  the letter $C$ or $c$, sometimes with certain parameters, 
will stand for positive constants not necessarily the same one at each occurrence, 
but are independent of the essential variables. 
\medskip

\section{Definitions and some lemmas }\label{S2}
\medskip
We first recall some notation and definitions. Given
a Lebesgue measurable set $E\subset\mathbb{R}^n$, $|E|$ will denote the Lebesgue measure of $E$. Let $B = B(x, r)$ be a ball in $\mathbb{R}^n$ centered at $x$ with radius $r$ and  $Q(x,r)$ be a cube in $\mathbb{R}^n$ centered at $x$ with the side length $2r$. A weight $w$ is a non-negative measurable and local integrable function on $\mathbb{R}^n$. The measure associated with $w$ is the set function given by $w(E)=\int_E wdx$. For $0<p<\infty$, we denote by $L^p(w)$ the space of all Lebesgue measurable function $f(x)$ such that
$$\|f\|_{L^p(w)}=\Big(\int_{\mathbb{R}^n}|f(x)|^pw(x)dx|\Big)^{1/p}.$$

Recall that a weight $w$ belongs to the classical Muckenhopt class $A_p$$(1<p<\infty)$, if 
\begin{equation*}
\sup_Q\Big(\frac{1}{|Q|}\int_Qw(y)dy\Big)\Big(\frac{1}{|Q|}\int_Qw(y)^{1-p'}dy\Big)^{\frac{p}{p'}}<\infty.
\end{equation*}
$w\in A_1$, if there is a constants $C$ such that
\begin{equation*}
\frac{1}{|Q|}\int_Qw(y)dy\leq C\inf_{x\in Q}w(x),\ \ \ \ for\ a.\ e.\ x\in\rn.
\end{equation*}
The following multiple weights calss $A_{\vec p}$ was first introduced in \cite{LOPTT}.
\begin{definition}{\bf (Multiple $A_{\vec{p}}$ weights class, \cite{LOPTT})}.\label{def2.1}
Let $1\leq p_1,\ p_2<\infty$, $\frac{1}{p}=\frac{1}{p_1}+\frac{1}{p_2}$, $\frac{1}{p_j}+\frac{1}{p_j'}=1,\ j=1,2$. Given $\vec{w}=(w_1,w_2)$, set 
$
v_{\vec{w}}=w_1^{p/p_1}w_2^{p/p_2}.
$
Then, we say $\vec{w}\in A_{\vec{p}}$ if 
\begin{equation*}
[\vec{w}]_{A_{\vec{p}}}=:\sup_Q\Big(\frac{1}{|Q|}\int_Qv_{\vec{w}}\Big)\Big(\frac{1}{|Q|}\int_Qw_1^{1-p_1'}\Big)^{\frac{p}{p_1'}}\Big(\frac{1}{|Q|}\int_Qw_2^{1-p_2'}\Big)^{\frac{p}{p_2'}}<\infty.
\end{equation*}
When $p_j=1,\ j=1,2$, $\Big(\frac{1}{|Q|}\int_Qw_j^{1-p_j'}\Big)^{\frac{p}{p_j'}}$ is understood as $(\inf_Qw_j)^{-1}$.
\end{definition}
It was shown in \cite{LOPTT} that $\vec{w}\in A_{\vec{p}}$ if and only if $v_{\vec{w}}\in A_{2p}$, $w_1^{1-p_1'}\in A_{2p_1'}$ and $w_2^{1-p_2'}\in A_{2p_2'}$.
\medskip

For the classical Muckenhopt weights class $A_p$, it enjoys the following properties:
\begin{lemma}\label{lem2.1}
Let $1<p<\infty$. Then
\begin{enumerate}[{\rm (i)}]
\item If $w\in A_p(\mathbb{R}^n)$, there exists a constant
$\theta\in(0,1)$ such that $w^{1+\theta}\in A_p(\mathbb{R}^n)$. Both
$\theta$ and the $A_p$ constant of $w^{1+\theta}$ depend only on 
$n,\,p$ and the $A_p$ constant of $w$.
\item If $w\in A_p(\mathbb{R}^n)$, we have
\begin{equation*}
\lim\limits_{N\rightarrow+\infty}\int_{|x|>N}\frac{w(x)}{|x|^{np}}dx=0,\ \ \ \lim\limits_{N\rightarrow+\infty}\int_{|x|>N}\frac{w^{1-p'}(x)}{|x|^{np'}}dx=0.
\end{equation*}
\item if $w\in A_\infty=\displaystyle\bigcup_{1\leq p<\infty}A_p$, then there exists a constant $\theta\in(0,1)$ such that for all cubes $Q$ and any set $E\subset Q$, 
$$\frac{w(E)}{w(Q)}\leq C\Big(\frac{|E|}{|Q|}\Big)^{\theta}.$$
\end{enumerate}
\end{lemma}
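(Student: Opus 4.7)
My plan is to treat all three parts of Lemma~\ref{lem2.1} as tailored applications of a single master tool, the reverse H\"older inequality for $A_\infty$ weights: if $w\in A_\infty$, then there exist $s>1$ and $C>0$ such that $(\tfrac{1}{|Q|}\int_Q w^s)^{1/s}\le C\tfrac{1}{|Q|}\int_Q w$ for every cube $Q$. Since $A_p\subset A_\infty$ and the duality $w\in A_p\iff w^{1-p'}\in A_{p'}$ yields $w^{1-p'}\in A_\infty$ as well, this tool is available for both $w$ and its dual weight simultaneously.

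For part~(i), I would first select $s>1$ so that both $w$ and $w^{1-p'}$ lie in $RH_s$, then choose $\theta>0$ small enough that $1+\theta<s$ and $1+\theta/(p-1)<s$. Combining H\"older's inequality with the two reverse H\"older estimates produces
\[\tfrac{1}{|Q|}\int_Q w^{1+\theta}\le C\Bigl(\tfrac{1}{|Q|}\int_Q w\Bigr)^{1+\theta}\quad\text{and}\quad \tfrac{1}{|Q|}\int_Q w^{-(1+\theta)/(p-1)}\le C\Bigl(\tfrac{1}{|Q|}\int_Q w^{1-p'}\Bigr)^{1+\theta}.\]
Multiplying the first by the $(p-1)$-th power of the second and invoking $w\in A_p$ delivers the $A_p$ condition for $w^{1+\theta}$ with a constant depending only on $n$, $p$, and $[w]_{A_p}$.

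For part~(ii) the cleanest route runs through the left-openness $A_p=\bigcup_{q<p}A_q$; I would derive this by picking $r>1$ with $w^{1-p'}\in RH_r$ and setting $q-1=(p-1)/r$, whereupon the $A_q$ condition for $w$ follows from its $A_p$ condition after applying reverse H\"older to $w^{1-p'}$. Once such a $q<p$ is fixed, the standard $A_q$-growth estimate $w(\lambda B)\le [w]_{A_q}\lambda^{nq}w(B)$ for $\lambda\ge 1$ (itself a two-line consequence of H\"older's inequality and the $A_q$ definition), coupled with a dyadic decomposition of $\{|x|>N\}$, yields
\[\int_{|x|>N}\frac{w(x)}{|x|^{np}}\,dx\le \sum_{k\ge 0}\frac{w(B(0,2^{k+1}N))}{(2^kN)^{np}}\le C\,w(B(0,1))\,N^{n(q-p)}\xrightarrow[N\to\infty]{}0,\]
and the second limit follows verbatim after replacing $w$ by $w^{1-p'}$ and $p$ by $p'$.

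Part~(iii) is the quickest. Starting from $w\in A_\infty\Rightarrow w\in RH_s$ for some $s>1$, H\"older's inequality gives
\[w(E)=\int_Q\chi_E\,w\le |E|^{1/s'}\Bigl(\int_Q w^s\Bigr)^{1/s}\le C\,|E|^{1/s'}|Q|^{-1/s'}\int_Q w=C\Bigl(\frac{|E|}{|Q|}\Bigr)^{1/s'}w(Q),\]
so the claim holds with $\theta=1/s'\in(0,1)$. The main delicacy across the three parts is the simultaneous selection of exponents in part~(i), where $\theta$ must be taken small enough that both reverse H\"older inequalities are in force at once; this is straightforward once the admissible reverse H\"older ranges of $w$ and $w^{1-p'}$ have been identified.
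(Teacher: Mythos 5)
Your proposal is correct, but it is worth noting that the paper does not actually prove Lemma \ref{lem2.1}: it simply attributes part (i) to Coifman--Fefferman \cite{CF} and part (ii) to \cite{GWWY}, and treats (iii) as a standard $A_\infty$ property. What you have written is essentially the classical argument underlying those citations, with the reverse H\"older inequality taken as the black box instead of the lemma itself: for (i) the Coifman--Fefferman self-improvement obtained by applying $RH_s$ to both $w$ and $w^{1-p'}$ and multiplying the two estimates; for (ii) the left-openness $w\in A_q$ for some $q<p$ plus the growth bound $w(\lambda B)\leq C\lambda^{nq}w(B)$ and a dyadic sum over annuli, with the second limit following by duality since $w^{1-p'}\in A_{p'}$; for (iii) H\"older against $RH_s$ giving $\theta=1/s'$. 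All three computations are sound (in (ii) the series $\sum_k 2^{-kn(p-q)}$ converges and the factor $N^{-n(p-q)}$ forces the limit to vanish, exactly as you say). Two small remarks: in (i) the condition you actually need is that the exponent $1+\theta$ lies in the common reverse H\"older range of $w$ and of $w^{1-p'}$, so your extra requirement $1+\theta/(p-1)<s$ is redundant rather than the operative one (harmless, since you choose $\theta$ small); and since the reverse H\"older inequality is itself a nontrivial theorem, your proof is self-contained only modulo that input---which is a perfectly reasonable level of detail given that the paper proves nothing here and relies entirely on references.
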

It should be pointed out that (i) of Lemma \ref{lem2.1} follows
from \cite{CF} and (ii) of Lemma \ref{lem2.1} follows from \cite{GWWY}.

\begin{definition}{\bf (Bilinear Hardy-Littiewood maximal operator, \cite{LOPTT})}.\label{def2.2}
For any $f,\ g\in L^1_{loc}(\rn)$ and $x\in \rn$, the bilinear Hardy-Littiewood maximal operator $\mathcal{M}$ is defined by
$$\mathcal{M}(f,g)(x)=\sup_{Q\ni x}\Big(\frac{1}{Q}\int_Q|f(y)|dy\Big)\Big(\frac{1}{Q}\int_Q|g(z)|dz\Big),$$
where the supremum is taken over all the cubes $Q$ of $\rn$ containing $x$.
\end{definition}
Then, the following characterization of the strong-type inequality holds for $\mathcal{M}$.
\begin{lemma}[\cite{LOPTT}]\label{lem2.2}
Let $1<p_j<\infty,\ j=1,2$ and $\frac{1}{p}=\frac{1}{p_1}+\frac{1}{p_2}$. Let $\vec{w}=(w_1,w_2)\in A_{\vec{p}}$ and $v_{\vec{w}}=w_1^{p/p_1}w_2^{p/p_2}$. Then the inequality
\begin{equation}
\|\mathcal{M}\|_{L^p(v_{\vec{w}})}\leq C\|f\|_{ L^{p_1}(w_1)}\|g\|_{ L^{p_2}(w_2)}
\end{equation}
holds for every $(f,g)\in L^{p_1}(w_1)\times L^{p_2}(w_2)$.
\end{lemma}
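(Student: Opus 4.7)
The plan is to prove Lemma \ref{lem2.2} in two stages: first establish the weak-type endpoint bound $\mathcal{M}:L^{p_1}(w_1)\times L^{p_2}(w_2)\to L^{p,\infty}(v_{\vec{w}})$, and then upgrade it to the strong-type bound via a multilinear Marcinkiewicz interpolation between weak-type estimates at slightly perturbed exponents.

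For the weak-type step, I would fix $\lambda>0$ and apply a Besicovitch / Calder\'on--Zygmund type covering to the level set $E_\lambda=\{x:\mathcal{M}(f,g)(x)>\lambda\}$, producing pairwise disjoint cubes $\{Q_i\}$ such that $\bigcup 3Q_i\supset E_\lambda$ and on each $Q_i$ one has $\prod_{j=1}^2 |Q_i|^{-1}\int_{Q_i}|f_j|>\lambda$. On a single such cube, H\"older's inequality applied separately in each factor gives
\begin{equation*}
\frac{1}{|Q_i|}\int_{Q_i}|f_j|\leq \Bigl(\frac{1}{|Q_i|}\int_{Q_i}|f_j|^{p_j}w_j\Bigr)^{1/p_j}\Bigl(\frac{1}{|Q_i|}\int_{Q_i}w_j^{1-p_j'}\Bigr)^{1/p_j'}.
\end{equation*}
Multiplying the two resulting estimates and absorbing the average of $v_{\vec{w}}$ over $Q_i$ together with the averages of $w_j^{1-p_j'}$ into the constant $[\vec{w}]_{A_{\vec{p}}}$ yields
\begin{equation*}
\lambda^p\, v_{\vec{w}}(Q_i)\leq C[\vec{w}]_{A_{\vec{p}}}\prod_{j=1}^2\Bigl(\int_{Q_i}|f_j|^{p_j}w_j\Bigr)^{p/p_j}.
\end{equation*}
Summing over $i$ and using disjointness of the $Q_i$ together with H\"older's inequality in $i$ with exponents $p_1/p$ and $p_2/p$ (conjugate since $p/p_1+p/p_2=1$) then produces the weak-type bound $\lambda^p\, v_{\vec{w}}(E_\lambda)\leq C[\vec{w}]_{A_{\vec{p}}}\|f_1\|_{L^{p_1}(w_1)}^p\|f_2\|_{L^{p_2}(w_2)}^p$.

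To upgrade weak to strong type, I would invoke the openness of the class $A_{\vec{p}}$: by the characterization $\vec{w}\in A_{\vec{p}}\Leftrightarrow v_{\vec{w}}\in A_{2p},\,w_j^{1-p_j'}\in A_{2p_j'}$, combined with the self-improving property of Muckenhoupt classes recalled in Lemma \ref{lem2.1}(i), there is some $\varepsilon>0$ such that $\vec{w}\in A_{\vec{q}}$ for every $\vec{q}=(q_1,q_2)$ with $|q_j-p_j|<\varepsilon$. Choosing two perturbed exponent pairs $(q_1^{(0)},q_2^{(0)})$ and $(q_1^{(1)},q_2^{(1)})$ bracketing $(p_1,p_2)$ and applying the weak-type bound at each endpoint, a bilinear Marcinkiewicz interpolation (in the spirit of Grafakos--Kalton) produces the desired strong-type estimate at $(p_1,p_2)$.

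The step I expect to be the main obstacle is the interpolation: on the bilinear diagonal $1/p=1/p_1+1/p_2$ one cannot simply iterate the scalar Marcinkiewicz theorem in each variable, and one has to verify that the openness radius $\varepsilon$ can be chosen uniformly so that both perturbed exponent pairs remain in $A_{\vec{q}}$ while genuinely bracketing $(p_1,p_2)$ in both coordinates. Once this is in place, the combination of the H\"older-type decomposition with the $A_{\vec{p}}$ bound on each selected cube is essentially routine.
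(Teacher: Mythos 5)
Note first that the paper itself gives no proof of Lemma \ref{lem2.2}: it is quoted verbatim from \cite{LOPTT}, so your argument has to stand on its own. Your first stage, the weak-type bound, is essentially the standard argument of \cite{LOPTT} and is fine: the per-cube estimate $\lambda^p v_{\vec{w}}(Q_i)\leq C[\vec{w}]_{A_{\vec{p}}}\prod_j(\int_{Q_i}|f_j|^{p_j}w_j)^{p/p_j}$ is correct, and the summation via H\"older in $i$ with exponents $p_j/p$ works; the only detail to add is that passing from the selected disjoint cubes $Q_i$ to the dilates $3Q_i$ covering $E_\lambda$ uses either the doubling of $v_{\vec{w}}$ (available since $v_{\vec{w}}\in A_{2p}$) or a Besicovitch bounded-overlap family.

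The second stage has a genuine gap, and it is not the one you flagged. First, the openness you assert, namely $\vec{w}\in A_{\vec{q}}$ for \emph{every} $\vec{q}$ in a neighborhood of $\vec{p}$, does not follow from Lemma \ref{lem2.1}(i) together with the characterization $v_{\vec{w}}\in A_{2p}$, $w_j^{1-p_j'}\in A_{2p_j'}$: when you change the exponents, the weights entering the $A_{\vec{q}}$ condition change as well, both the dual weights $w_j^{1-q_j'}$ and, crucially, the new target weight $w_1^{q/q_1}w_2^{q/q_2}$, which for a non-radial perturbation is not a power of $v_{\vec{w}}$ and hence is not controlled by Lemma \ref{lem2.1}(i); establishing membership in $A_{\vec{q}}$ requires an actual reverse-H\"older argument. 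Second, and more seriously, even granting such openness, your two weak-type endpoints map into $L^{q^{(i)},\infty}$ of the weights $w_1^{q^{(i)}/q_1^{(i)}}w_2^{q^{(i)}/q_2^{(i)}}$, i.e.\ the target measure changes with the exponent pair, whereas the bilinear Marcinkiewicz theorem of Grafakos--Kalton is formulated for fixed measures on the domain and target; it cannot be cited off the shelf here, and a weak-type bilinear interpolation with change of measures is exactly the nontrivial content your plan leaves unproved. The standard repair is to perturb radially: take $q_j=p_j/r$ with $r>1$ close to $1$, which leaves $v_{\vec{w}}=w_1^{p/p_1}w_2^{p/p_2}$ unchanged; the reverse-H\"older inequality for $\sigma_j=w_j^{1-p_j'}\in A_\infty$ (equivalently Lemma \ref{lem2.1}(i) applied to $\sigma_j$, together with the classical openness of $A_{2p}$ in $p$) shows $\vec{w}\in A_{(p_1/r,p_2/r)}$, and then your weak-type bound at $(p_1/r,p_2/r)$, now with the \emph{same} target weight $v_{\vec{w}}$, combined with the trivial $L^\infty\times L^\infty\to L^\infty$ estimate and a hands-on Marcinkiewicz-type splitting for the positive bi-sublinear operator $\mathcal{M}$ (or the direct cube-by-cube argument of \cite{LOPTT} using $\sigma_j\in A_\infty$), yields the strong-type conclusion.
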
	

In order to prove Theorem \ref{thm1.1} and Theorem \ref{thm1.2}, we need the following estimates:
\begin{lemma}[\cite{DMX}]\label{lem2.3}
For any $\delta>0$, $0<\epsilon<\frac{1}{2}$, we have the following inequalities
\begin{equation}
\iint_{\frac{\delta}{1+2\epsilon}\leq|y|+|z|\leq\delta}\frac{dydz}{(|y|+|z|)^{2n}}\leq C[1-(1+2\epsilon)^{-n}],
\end{equation}
\begin{equation}\iint_{\delta\leq|y|+|z|\leq\frac{\delta}{1-2\epsilon}}\frac{dydz}{(|y|+|z|)^{2n}}\leq C[(1-2\epsilon-1)^{-n}],
\end{equation}
where the constant $C$ is independent of $\delta$ and $\epsilon$.
\end{lemma}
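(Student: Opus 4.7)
The plan is to reduce both double integrals over $(y,z)\in\R^{2n}$ to one-dimensional radial integrals via two successive changes of variables, and then to compare the resulting logarithms with the claimed right-hand sides by the mean value theorem.

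First I would pass to polar coordinates separately in $y\in\R^n$ and $z\in\R^n$, setting $s=|y|$, $t=|z|$. Because the integrand $(|y|+|z|)^{-2n}$ and the region of integration depend only on $s$ and $t$, the angular variables contribute only a constant factor $|\Sn|^2$, and each $dy$, $dz$ produces a Jacobian $s^{n-1}$, $t^{n-1}$. The integrals reduce to
\begin{equation*}
|\Sn|^{2}\int_{0}^{\infty}\!\!\int_{0}^{\infty}\mathbf 1_{\{a\le s+t\le b\}}\,\frac{s^{n-1}t^{n-1}}{(s+t)^{2n}}\,ds\,dt,
\end{equation*}
where $(a,b)=(\delta/(1+2\epsilon),\delta)$ in the first case and $(a,b)=(\delta,\delta/(1-2\epsilon))$ in the second. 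Next I would substitute $r=s+t$, $\alpha=s/(s+t)\in[0,1]$, which has Jacobian $r$ and transforms $s^{n-1}t^{n-1}=r^{2n-2}\alpha^{n-1}(1-\alpha)^{n-1}$. The $\alpha$-integral produces the beta-function constant $B(n,n)=\int_0^1\alpha^{n-1}(1-\alpha)^{n-1}d\alpha$, and the $r$-integral becomes $\int_a^b dr/r=\log(b/a)$, independent of $\delta$.

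It remains to check each logarithmic estimate. For the first inequality, $b/a=1+2\epsilon$, so the integral equals $C_n\log(1+2\epsilon)$; to convert this into the form $1-(1+2\epsilon)^{-n}$, I would apply the mean value theorem to $h(x)=x^{-n}$ on $[1,1+2\epsilon]$ to obtain $1-(1+2\epsilon)^{-n}\ge n(1+2\epsilon)^{-n-1}\cdot 2\epsilon\ge 2n\,2^{-n-1}\epsilon$ for $\epsilon<1/2$, and then use $\log(1+2\epsilon)\le 2\epsilon$. For the second inequality, $b/a=1/(1-2\epsilon)$, so the integral equals $-C_n\log(1-2\epsilon)$; reading the intended right-hand side as $C[(1-2\epsilon)^{-n}-1]$ (the statement as printed contains an obvious typo), the same mean value argument applied to $h$ on $[1-2\epsilon,1]$ gives $(1-2\epsilon)^{-n}-1\ge 2n\epsilon$, while $-\log(1-2\epsilon)\le 2\epsilon/(1-2\epsilon)$ is comparable to $\epsilon$ uniformly for $\epsilon<1/2$.

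There is no real obstacle: the only step that requires any thought is finding the correct change of variables $(s,t)\mapsto(r,\alpha)$, and from there the proof is a straightforward comparison of elementary functions. The crucial point to highlight is that the constant $C$ depends on $n$ through $|\Sn|^2 B(n,n)$ and on the factor $2^{n+1}/n$ from the mean value step, but in no way on $\delta$ or $\epsilon$, as required.
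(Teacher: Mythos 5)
The paper itself gives no proof of this lemma (it is quoted from \cite{DMX}), so your argument can only be judged on its own terms, and in outline it is correct and is the natural computation: polar coordinates in $y$ and $z$ and the change of variables $r=s+t$, $\alpha=s/(s+t)$ (equivalently, the fact that $|\{|y|+|z|\le r\}|=c_nr^{2n}$) evaluate both integrals exactly as $C_n\log(b/a)$, namely $C_n\log(1+2\epsilon)$ and $C_n\log\big((1-2\epsilon)^{-1}\big)$, with $C_n=|\Sn|^2B(n,n)$ independent of $\delta$. Your treatment of the first inequality is fine ($\log(1+2\epsilon)\le 2\epsilon$ together with $1-(1+2\epsilon)^{-n}\ge n(1+2\epsilon)^{-n-1}\cdot 2\epsilon\ge cn2^{-n}\epsilon$, or even more simply $1-(1+2\epsilon)^{-n}\ge 1-(1+2\epsilon)^{-1}\ge\epsilon$), and you correctly read the misprinted right-hand side of the second inequality as $C[(1-2\epsilon)^{-n}-1]$.

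The one step that fails as written is the final comparison in the second inequality: you assert that $-\log(1-2\epsilon)\le 2\epsilon/(1-2\epsilon)$ ``is comparable to $\epsilon$ uniformly for $\epsilon<1/2$'' and pair this with $(1-2\epsilon)^{-n}-1\ge 2n\epsilon$. But $2\epsilon/(1-2\epsilon)$, and indeed $-\log(1-2\epsilon)$ itself, blows up as $\epsilon\to 1/2$, so it is not comparable to $\epsilon$ uniformly on $(0,1/2)$, and the chain $-\log(1-2\epsilon)\le C\epsilon\le C'[(1-2\epsilon)^{-n}-1]$ breaks near the endpoint. The repair is immediate and stays inside your framework: from $\log x\le x-1$ with $x=(1-2\epsilon)^{-1}$ you get $-\log(1-2\epsilon)\le(1-2\epsilon)^{-1}-1\le(1-2\epsilon)^{-n}-1$ for $n\ge 1$, which yields the second inequality with a constant independent of $\delta$ and $\epsilon$, no mean value step needed. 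With that one-line fix your proof is complete; note that in the paper the lemma is only ever applied with $0<\epsilon<1/4$, where your original comparison would have sufficed, but the statement as given demands uniformity on all of $0<\epsilon<1/2$.
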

\medskip
We end this section by introducing the general weighted version of Frechet-Kolmogorov
theorems, which was proved by Xue, Yabuta and Yan in \cite{XYY}.
\begin{lemma}\label{lem2.4}	{\rm (\rm \cite{XYY})}. Let $w$ be a weight on $\mathbb{R}^n$. Assume that $w^{-1/(p_0-1)}$ is also a weight on $\mathbb{R}^n$ for some $p_0>1$. Let $0<p<\infty$ and $\mathcal{F}$ be a subset in $L^p(w)$, then $\mathcal{F}$ is sequentially compact in $L^p(w)$ if the following three conditions are satisfied:
\begin{enumerate}[{\rm (i)}]	
\item $\mathcal{F}$ is bounded, i.e.,
$\sup\limits_{f\in\mathcal{F}}\|f\|_{L^p(w)}<\infty$;
\item $\mathcal{F}$ uniformly vanishes at infinity, i.e.,
\begin{equation*}
\lim\limits_{N\rightarrow\infty}\sup\limits_{f\in\mathcal{F}}\int_{|x|>N}|f(x)|^pw(x)dx=0;
\end{equation*}
\item $\mathcal{F}$ is uniformly equicontinuous, i.e.,
\begin{equation*}
\lim\limits_{|h|\rightarrow0}\sup\limits_{f\in\mathcal{F}}\int_{\mathbb{R}^n}|f(\cdot+h)-f(\cdot)|^pw(x)dx=0.
\end{equation*}
\end{enumerate}
\end{lemma}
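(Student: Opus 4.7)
The plan is to adapt the classical Frechet-Kolmogorov theorem to the weighted setting via the familiar three-step routine of truncation, mollification, and Arzela-Ascoli. Given any sequence $\{f_n\}\subset\mathcal{F}$, the goal is to extract an $L^p(w)$-Cauchy subsequence; by a standard diagonalization it suffices, for each $\epsilon>0$, to produce a subsequence whose elements are eventually $\epsilon$-close in the (quasi)norm once the far-field tails are discarded and a small-scale smoothing is applied.

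First I would use (ii) to pick $N$ large enough that $\|f\,\mathbf{1}_{|x|>N}\|_{L^p(w)}<\epsilon/3$ uniformly in $f\in\mathcal{F}$, reducing matters to $B(0,N)$. Next, with a standard mollifier $\phi_\delta\geq 0$, $\int\phi_\delta=1$, $\operatorname{supp}\phi_\delta\subset B(0,\delta)$, Minkowski's integral inequality and (iii) give (for $p\geq 1$)
\begin{equation*}
\|f-f*\phi_\delta\|_{L^p(w)}\leq\sup_{|h|\leq\delta}\|f(\cdot)-f(\cdot-h)\|_{L^p(w)}<\epsilon/3
\end{equation*}
for $\delta$ small enough, uniformly over $\mathcal{F}$. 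The mollified family $\mathcal{F}_\delta=\{f*\phi_\delta:f\in\mathcal{F}\}$ restricted to $\overline{B(0,N)}$ is then equicontinuous by the smoothness of $\phi_\delta$ and uniformly bounded via the pointwise Holder estimate
\begin{equation*}
|f*\phi_\delta(x)|\leq\|\phi_\delta\|_\infty\,\|f\|_{L^p(w)}\Big(\int_{B(x,\delta)}w^{-1/(p-1)}\,dy\Big)^{(p-1)/p},
\end{equation*}
the finiteness of the last integral being exactly the place where the hypothesis that $w^{-1/(p_0-1)}$ is a weight enters. Arzela-Ascoli then produces a subsequence of $\{f_{n}*\phi_\delta\}$ converging uniformly on $\overline{B(0,N)}$, and since $w$ is locally integrable this upgrades to $L^p(w)$-convergence there; assembling the three approximations yields the desired Cauchy subsequence.

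The hard part will be the regime $0<p<1$, where $L^p(w)$ is only a quasi-Banach space and Minkowski's integral inequality is unavailable. I would handle it by discretizing $\phi_\delta$ as a Riemann sum $\sum_i a_i\,\delta_{h_i}$, exploiting the $p$-subadditivity $|\sum_i b_i|^p\leq\sum_i|b_i|^p$ valid for $p\leq 1$ to bound
\begin{equation*}
\Big\|f-\sum_i a_i f(\cdot-h_i)\Big\|_{L^p(w)}^p\leq\sum_i a_i^p\|f-f(\cdot-h_i)\|_{L^p(w)}^p,
\end{equation*}
and then invoking (iii) with a careful balance between the mesh of the partition and the weights $a_i$. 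A secondary delicate point is ensuring $w^{-1/(p-1)}\in L^1_{\mathrm{loc}}$ when $p\neq p_0$: the case $p\geq p_0$ is covered by Jensen's inequality applied to the concave map $t\mapsto t^{(p_0-1)/(p-1)}$, whereas $1<p<p_0$ requires an alternative local embedding that bounds $\int_B|f|^{q}$ for some $q\geq 1$ chosen using the $p_0$-hypothesis and Holder. With these two technicalities resolved, the three-step structure described above carries through and yields sequential compactness for every $0<p<\infty$.
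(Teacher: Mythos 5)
The paper itself gives no proof of this lemma (it is quoted from \cite{XYY}), so your proposal has to stand on its own, and as written it does not: the engine of your argument --- mollification $f\ast\phi_\delta$, a pointwise bound, Arzel\`a--Ascoli on $\overline{B(0,N)}$ --- presupposes that members of $\mathcal{F}$ are locally integrable, which the hypotheses do not provide. The assumption is only that $w^{-1/(p_0-1)}\in L^1_{loc}(\mathbb{R}^n)$ for \emph{some} $p_0>1$, with no relation between $p_0$ and $p$; H\"older then yields only the local embedding
\begin{equation*}
\Big(\int_B |f|^{p/p_0}\,dx\Big)\leq\Big(\int_B|f|^pw\,dx\Big)^{1/p_0}\Big(\int_Bw^{-1/(p_0-1)}\,dx\Big)^{1/p_0'},
\end{equation*}
and the exponent $p/p_0$ may be smaller than $1$. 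Your pointwise estimate instead needs $\int_B w^{-1/(p-1)}<\infty$, which is not assumed, and the repair you propose for $1<p<p_0$ --- a local bound on $\int_B|f|^q$ for some $q\geq1$ --- cannot exist in general, since H\"older forces $q\leq p/p_0<1$ in that regime. Concretely, for $w(x)=|x|^{\beta}$ with $\beta>n(p-1)$ the hypothesis holds for any $p_0>1+\beta/n$, yet $f(x)=|x|^{-\gamma}\chi_{B(0,1)}(x)$ with $n\leq\gamma<(n+\beta)/p$ belongs to $L^p(w)$ (and the singleton $\{f\}$ satisfies (i)--(iii)) while $f\notin L^1_{loc}$, so $f\ast\phi_\delta$ is not even defined. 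The case $0<p<1$ (already for $w\equiv1$) has the same defect, and your discretization patch does not close it: with $\sum_ia_i=1$ one has $\sum_ia_i^p\to\infty$ as the mesh is refined, while a coarse mesh neither approximates $f\ast\phi_\delta$ nor yields a precompact family (finitely many translates of $f$ are no smoother than $f$), so the ``careful balance'' you invoke has no mechanism behind it.

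A proof in the stated generality must dispense with convolution and uniform convergence altogether. One standard route is total boundedness: use (ii) to reduce to $B(0,N)$; partition it into cubes $Q_j$ of side comparable to the $\rho$ furnished by (iii); and approximate $f$ by the simple function $\sum_j f(y_j)\chi_{Q_j}$, where the sample points are obtained not by averaging $f$ (again an integrability issue) but by Fubini and pigeonhole: for each $j$ there is $y_j\in Q_j$ with
\begin{equation*}
\int_{Q_j}|f(x)-f(y_j)|^pw(x)\,dx\leq\frac{1}{|Q_j|}\int_{Q_j}\int_{Q_j}|f(x)-f(y)|^pw(x)\,dx\,dy,
\end{equation*}
and after the change of variables $y=x+h$, $|h|\lesssim\rho$, the sum over $j$ of the right-hand sides is controlled by condition (iii). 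Simple functions on the fixed finite grid with bounded $L^p(w)$ quasi-norm form a bounded subset of a finite-dimensional space, hence are totally bounded, and sequential compactness follows for every $0<p<\infty$ without Minkowski's integral inequality and without any local integrability of $f$. If you insist on your outline, it is salvageable only in the regime $p\geq p_0$, where your Jensen step does give $w^{-1/(p-1)}\in L^1_{loc}$ and hence $L^p(w)\subset L^1_{loc}$; as a proof of the lemma as stated, the proposal has a genuine gap.
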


\bigskip

\section{Proof of Theorems \ref{thm1.1} and \ref{thm1.2} }\label{S3}
\medskip
In this section, we shall adapt the methods used in \cite{DMX} and \cite{TXYY} to prove Theorem \ref{thm1.1}-\ref{thm1.2}. 

\begin{proof}[Proof of Theorem \ref{thm1.1}]
We only prove $T^*_{b,1}$ is compact, and the proof of the commutator $T^*_{b,2}$ can be get similarly. We shall prove Theorem \ref{thm1.1} via smooth truncated techniques.

First, we introduce the following smooth truncated function. Let $\varphi\in C^{\infty}([0,\infty))$ satisfy
\begin{eqnarray}\label{3.1}
0\leq\varphi\leq1\ \ \ and\ \ \
\varphi(x)=
\begin{cases}
1,          &x\in[0,1],\\
0,          &x\in[2,\infty).
\end{cases}
\end{eqnarray}
For any $\eta>0$, let
\begin{equation}\label{3.2}
K_\eta(x,y,z)=K(x,y,z)\Big[1-\varphi\Big(\frac{2}{\eta}(|x-y|+|x-z|)\Big)\Big].
\end{equation}
Define
\begin{equation}\label{3.3}
T_\eta^*(f,g)(x)=\sup_{\delta>0}\Big|\iint_{|x-y|+|x-z|>\delta}K_\eta(x,y,z)f(y)g(z)dydz\Big|
\end{equation}
and
\begin{equation}\label{3.4}
T_{\eta,b,1}^*(f,g)(x)=\sup_{\delta>0}\Big|\iint_{|x-y|+|x-z|>\delta}K_\eta(x,y,z)(b(x)-b(y))f(y)g(z)dydz\Big|.
\end{equation}

For any $b\in\mathcal{C}_c^\infty(\rn)$ and $\eta>0$, by \eqref{3.2}, \eqref{3.4}, one has
\begin{align*}
\begin{split}
&|T_{b,1}^*(f,g)(x)-T_{\eta,b,1}^*(f,g)(x)|\\
&\leq \sup_{\delta>0}\Big|\iint_{\delta<|x-y|+|x-z|\leq\eta}K(x,y,z)\varphi\Big(\frac{2}{\eta}(|x-y|+|x-z|)\Big)(b(x)-b(y))f(y)g(z)dydz\Big|\\
%&\leq \displaystyle\sup_{\delta>0}\iint_{\delta<|x-y|+|x-z|\leq\eta}\frac{|b(x)-b(y)||f(y)g(z)|}{(|x-y|+|x-z|)^{2n}}dydz\\
&\leq \|\nabla b\|_\infty\displaystyle\sup_{\delta>0}\iint_{\delta<|x-y|+|x-z|\leq\eta}\frac{|x-y||f(y)g(z)|}{(|x-y|+|x-z|)^{2n}}dydz\\
&\leq C\eta\displaystyle\sup_{\delta>0}\sum_{j=0}^{\infty}\frac{2^{-j}}{(2^{-j-1}\eta)^{2n}}\iint_{2^{-j-1}\eta<|x-y|+|x-z|\leq2^{-j}\eta}|f(y)g(z)|dydz\\
&\leq C\eta\displaystyle\sup_{\delta>0}\sum_{j=0}^{\infty}\frac{2^{-j}}{(2^{-j}\eta)^{2n}}\int_{Q(x,2^{-j}\eta)}|f(y)|dy\int_{Q(x,2^{-j}\eta)}|g(z)|dz\\
&\leq C\eta \mathcal{M}(f,g)(x).
\end{split}
\end{align*}
Then, using the boundedness of $\mathcal{M}$ from $L^{p_1}(w_1)\times L^{p_2}(w_2)$ to $L^p(v_{\vec{w}})$ (Lemma \ref{lem2.2}), we can obtain
\begin{equation*}
\|T_{b,1}^*(f,g)-T_{\eta,b,1}^*(f,g)\|_{L^p(v_{\vec{w}})}\leq C\eta\|f\|_{L^p(v_{\vec{w}})},
\end{equation*}
which implies that
\begin{equation}\label{3.5}
\lim_{\eta\rightarrow0}\|T_{b,1}^*(f,g)-T_{\eta,b,1}^*(f,g)\|_{L^p(v_{\vec{w}})}=0.
\end{equation}

On the other hand, if $b\in\CMO(\rn)$, then for any $\epsilon>0$, there exists $b_\epsilon\in\mathcal{C}_c^\infty(\rn)$ such that $\|b-b_\epsilon\|_{\BMO(\rn)}<\epsilon$, so that
\begin{equation*}\aligned
\|T_{b,1}^*(f,g)-T_{b_\epsilon,1}^*(f,g)\|_{L^p(v_{\vec{w}})}&\leq\|T_{b-b_\epsilon,1}^*(f,g)\|_{L^p(v_{\vec{w}})}\\&\leq C\|b-b_\epsilon\|_{\BMO(\rn)}\|f\|_{L^{p_1}(w_1)}\|g\|_{L^{p_2}(w_2)}\\&\leq C\epsilon.\endaligned
\end{equation*}
Thus, to prove $T_{b,1}^*$ is compact on 
$L^p(v_{\vec{w}})$ for any $b\in\CMO(\rn)$, it suffices to prove that $T_{b,1}^*$ is compact on $L^p(v_{\vec{w}})$ for any $b\in\mathcal{C}_c^\infty(\mathbb{R}^n)$. By \eqref{3.5} and \cite{Y}, 
it suffices to show that $T^*_{\eta,b,1}$ is compact for any $b\in\mathcal{C}_c^\infty(\mathbb{R}^n)$ when $\eta>0$ is small enough. For arbitrary bounded sets $F\subset L^{p_1}(w_1)$ and $G\subset L^{p_2}(w_2)$, let 
$$\mathcal{F}=\{T_{\eta,b,1}^*(f,g):f\in F,\ g\in G\}.$$
Then, we need to show that for $b\in \mathcal{C}_c^\infty(\mathbb{R}^n)$, $\mathcal{F}$ satisfies the conditions$
\rm (i)$-$\rm(iii)$ of Lemma \ref{lem2.4}. We divide the proof in three steps.

{\bf Step I. } $\mathcal{F}$ satisfies condition
$\rm (i)$ .

First, we verify that $K_\eta(x,y,z)$ satisfies the size and regularity conditions in the introduction section with $\tilde{A}=\max\{2A, 2^{2n+2+\gamma}A\|\nabla\varphi\|_{\infty}\}.$
From the definition of $K_\eta(x,y,z)$, it is easy to see that
\begin{equation}\label{3.6}
|K_\eta(x,y,z)|\leq|K(x,y,z)|\leq\frac{\tilde{A}}{(|x-y|+|x-z|)^{2n}}.
\end{equation}

Next, we will show, when $|x-x'|\leq\frac{1}{2}\max\{|x-y|,
|x-z|\}$, it holds that
\begin{equation}\label{3.7}
|K_{\eta}(x,y,z)-K_{\eta}(x',y,z)|\leq \frac{\tilde{A}|x-x'|^{\gamma}}{(|x-y|+|x-z|)^{2n+\gamma}}
\end{equation}
We consider the following four cases:

(a) $|x-y|+|x-z|\geq\eta$ and $|x'-y|+|x'-z|\geq\eta$. In this case, we have
$K_\eta(x,y,z)=K(x,y,z)$ and $K_\eta(x',y,z)=K(x',y,z)$, which together with the size condition yields \eqref{3.7}.

(b) $|x-y|+|x-z|\geq\eta$ and $|x'-y|+|x'-z|<\eta$. Note that $|x-x'|\leq\frac{1}{2}\max\{|x-y|,|x-z|\}$. In this case, it holds that $K_\eta(x,y)=K(x,y)$ and $\eta>|x'-y|+|x'-z|\geq\frac{1}{2}(|x-y|+|x-z|)$. These together with regularity conditions and $|x-x'|\leq\frac{1}{2}(|x-y|+|x-z|)$ imply that
\begin{align*}
\begin{split}
|K_{\eta}(x,y,z)-K_{\eta}(x',y,z)|&\leq|K(x,y,z)-K(x',y,z)|+|K(x',y,z)|\varphi\Big(\frac{2}{\eta}|x'-y|+|x'-z|\Big)\\
& =|K(x,y,z)-K(x',y,z)|+|K(x',y,z)|\\
&\quad\times\Big|\varphi\Big(\frac{2}{\eta}(|x'-y|+|x'-z|)\Big)-\varphi\Big(\frac{2}{\eta}(|x-y|+|x-z|)\Big)\Big|\\
&\leq\displaystyle\frac{A|x-x'|^{\gamma}}{(|x-y|+|x-z|)^{2n+\gamma}}+
\frac{A|x-x'|}{(|x'-y|+|x'-z|)^{2n}}\frac{4}{\eta}\|\nabla\varphi\|_{\infty}\\
&\leq\displaystyle\frac{A|x-x'|^{\gamma}}{(|x-y|+|x-z|)^{2n+\gamma}}+
\frac{4A\|\nabla\varphi\|_{\infty}|x-x'|}{(|x'-y|+|x'-z|)^{2n+1}}\\
&\leq\displaystyle\frac{A|x-x'|^{\gamma}}{(|x-y|+|x-z|)^{2n+\gamma}}+
\frac{2^{2n+3}A\|\nabla\varphi\|_{\infty}|x-x'|^{\gamma}}{(|x-y|+|x-z|)^{2n+\gamma}}\frac{|x-x'|^{1-\gamma}}{(|x-y|+|x-z|)^{1-\gamma}}\\
&\leq\displaystyle\frac{A|x-x'|^{\gamma}}{(|x-y|+|x-z|)^{2n+\gamma}}+
\frac{2^{2n+3}A\|\nabla\varphi\|_{\infty}|x-x'|^{\gamma}}{(|x-y|+|x-z|)^{2n+\gamma}}(\frac{1}{2})^{1-\gamma}\\
&\leq \displaystyle\frac{\tilde{A}|x-x'|^{\gamma}}{(|x-y|+|x-z|)^{2n+\gamma}}.
\end{split}
\end{align*}
which proves \eqref{3.7}.

(c) $|x-y|+|x-z|<\eta$ and $|x'-y|+|x'-z|\geq\eta$. This case is similar to
case (b).

(d) $|x-y|+|x-z|<\eta$ and $|x'-y|+|x'-z|<\eta$. Then, we have
\begin{align*}
\begin{split}
|K_{\eta}(x,y,z)-K_{\eta}(x',y,z)|&\leq|K(x,y,z)-K(x',y,z)|+|K(x,y,z)-K(x',y,z)|\varphi\Big(\frac{2}{\eta}(|x-y|+|x-z|)\Big)\\
&\quad+|K(x',y,z)|\Big|\varphi\Big(\frac{2}{\eta}(|x'-y|+|x'-z|)\Big)-\varphi\Big(\frac{2}{\eta}(|x-y|+|x-z|)\Big)\Big|\\
&\leq\displaystyle\frac{2A|x-x'|^{\gamma}}{(|x-y|+|x-z|)^{2n+\gamma}}+
\frac{4A\|\nabla\varphi\|_{\infty}|x-x'|^{\gamma}}{(|x-y|+|x-z|)^{2n+\gamma}}(\frac{1}{2})^{1-\gamma}\\
&\leq \displaystyle\frac{\tilde{A}|x-x'|^{\gamma}}{(|x-y|+|x-z|)^{2n+\gamma}}.
\end{split}
\end{align*}

Similar arguments to those in deriving \eqref{3.7} may give that
\begin{equation}\label{3.8}
|K(x,y,z)-K(x,y',z)|\leq\frac{\tilde{A}|y-y'|^\gamma}{(|x-y|+|x-z|)^{n+\gamma}},
\end{equation}
whenever $|y-y'|\leq\frac{1}{2}\max\{|x-y|,|x-z|\}$, and
\begin{equation}\label{3.9}
|K_\eta(x,y,z)-K_\eta(x,y,z')|\leq\frac{\tilde{A}|z-z'|^\gamma}{(|x-y|+|x-z|)^{n+\gamma}},
\end{equation}
whenever $|z-z'|\leq\frac{1}{2}\max\{|x-y|,|x-z|\}$.

Hence, the weighted strong type estimate also hold for $T_\eta^*$, $T^*_{\eta,{b,j}},\ j=1,2$.  Thus, we have
\begin{equation*}
\sup\limits_{f\in F, g\in G}\|T^*_{\eta,b,1}(f,g)\|_{L^p(v_{\vec{w}})}\leq C\sup\limits_{f\in F,g\in G}\|f\|_{L^{p_1}(w_1)}\|g\|_{L^{p_2}(w_2)}\leq C<\infty,
\end{equation*}
which yields the fact that the set $\mathcal{F}$ is bounded.

{\bf Step II.} $\mathcal{F}$ satisfies condition
$\rm (ii)
$.

We adapt the method using in \cite{RC} to verify the condition (ii) of Lemma \ref{lem2.4}. Assume $b\in\mathcal{C}_c^\infty(\mathbb{R}^n)$ and $\supp(b)\subset B(0,R)$, where $B(0,R)$ is the ball of radius $R>1$ center at origin in $\mathbb{R}^n$. For any $|x| > N > 2R$ and $l>0$, 
denote
$$T_0(f,g)(x)=\int_{|z|<|x|}\int_{|y|<R}|k_\eta(x,y,z)||f(y)||g(z)|dydz,$$
and for $l\geq1$,
$$T_l(f,g)(x)=\int_{2^{l-1}|x|<|z|<2^l|x|}\int_{|y|<R}|k_\eta(x,y,z)||f(y)||g(z)|dydz.$$
Since $\vec{w}\in A_{\vec{p}}$, we have $w_1^{-p_1'/p_1}=w_1^{1-p_1'}\in A_{2p_1'}\subset A_\infty(\rn)$, then, by Lemma \ref{lem2.1} (iii), there exists a constant $\theta\in(0,1)$ such that
$$
\int_{B(0,R)}w_1^{-p_1'/p_1}(y)dy\leq C(2^{-(j+l)}R/N)^{n\theta}\int_{B(0,2^{l+j}N)}w_1^{-p_1'/p_1}(y)dy
$$
holds  for any integers $l\geq0,j\geq1$. This gives that
\begin{align*}
\begin{split}
T_l(f,g)(x)&\leq C\int_{2^{l-1}|x|<|z|<2^l|x|}\int_{|y|<R}\frac{|f(y)||g(z)|}{(|x|+|x-z|)^{2n}}dydz\\
&\leq C(2^{l-1}|x|)^{-2n}\int_{2^{l-1}|x|<|z|<2^l|x|}\int_{|y|<R}|f(y)||g(z)|dydz\\
&\leq C(2^{l-1}|x|)^{-2n}\|f\|_{L^{p_1}(w_1)}^{p_1}\|g\|_{L^{p_2}(w_2)}^{p_2}\Big(\int_{B(0,R)}w_1^{-p_1'/p_1}(y)dy\Big)^{1/p_1'}\\
&\quad\times\Big(\int_{B(0,2^{l}|x|)}w_1^{-p_2'/p_2}(y)dy\Big)^{1/p_2'}\\
&\leq C(2^{l-1}|x|)^{-2n}(2^{-(j+l)}R/N)^{n\theta/p_1'}\Big(\int_{B(0,2^{l+j}N)}w_1^{-p_1'/p_1}(y)dy\Big)^{1/p_1'}\\
&\quad\times\Big(\int_{B(0,2^{l}|x|)}w_1^{-p_2'/p_2}(y)dy\Big)^{1/p_2'}.
\end{split}
\end{align*}
In the same way, we can get
$$
T_0(f,g)(x)\leq C|x|^{-2n}(2^{-j}R/N)^{n\theta/p_1'}\Big(\int_{B(0,2^{j}N)}w_1^{-p_1'/p_1}(y)dy\Big)^{1/p_1'}
\Big(\int_{B(0,|x|)}w_1^{-p_2'/p_2}(y)dy\Big)^{1/p_2'}.
$$
Therefore, for $p\geq1$, by Minkowski's inequality, we have
\begin{align*}
\begin{split}
&\Big(\int_{|x|>N}|T^*_{\eta,b,1}(f,g)(x)|^pv_{\vec{w}}(x)dx\Big)^{1/p}\\
&\leq C\sum_{j=1}^{\infty}\Big(\int_{2^{j-1}N<|x|<2^{j}N}\Big(\int_{\rn}\int_{|y|<R}|k_\eta(x,y,z)||f(y)||g(z)|dydz\Big)^pv_{\vec{w}}(x)dx\Big)^{1/p}\\
&\leq C\sum_{j=1}^{\infty}\sum_{l=0}^{\infty}\Big(\int_{2^{j-i}N<|x|<2^jN}|T_l(f,g)(x)|^pv_{\vec{w}}(x)dx\Big)^{1/p}\\
&\leq C\sum_{j=1}^{\infty}\sum_{l=0}^{\infty}(2^{l+j-2}N)^{-2n}(2^{-(j+l)}R/N)^{n\theta /p_1'}\Big(\int_{B(0,2^jN)}v_{\vec{w}}(x)dx\Big)^{1/p}\\
&\quad\times\Big(\int_{B(0,2^{j+l}N)}w_1^{-p_1'/p_1}(y)dy\Big)^{1/p_1'}\Big(\int_{B(0,2^{j+l}N)}w_1^{-p_2'/p_2}(y)dy\Big)^{1/p_2'}\\
&\leq C\sum_{j=1}^{\infty}2^{-jn\theta /p_1'}\sum_{l=0}^{\infty}2^{-ln\theta /p_1'}(R/N)^{n\theta /p_1'}\leq C(R/N)^{n\theta /p_1'}.
\end{split}
\end{align*}
When $p<1$, since $(\sum_{l=0}^{\infty}a_l)^p\leq\sum_{l=0}^{\infty}a_l^p$, similar to the above estimate, we can obtain
\begin{align*}
\begin{split}
\int_{|x|>N}|T^*_{\eta,b,1}(f,g)(x)|^pv_{\vec{w}}(x)dx
&\leq C\sum_{j=1}^{\infty}\int_{2^{j-1}N<|x|<2^{j}N}\Big|\sum_{l=0}^{\infty}T_l(f,g)(x)\Big|^pv_{\vec{w}}(x)dx\\
&\leq C\sum_{j=1}^{\infty}\sum_{l=0}^{\infty}\int_{2^{j-i}N<|x|<2^jN}|T_l(f,g)(x)|^pv_{\vec{w}}(x)dx\\
&\leq C(R/N)^{n\theta p/p_1'}.
\end{split}
\end{align*}
Then, for any $1/2<p<\infty$, we have
\begin{equation*}
\lim_{N\rightarrow\infty}\int_{|x|>N}|T^*_{\eta,b,1}(f,g)(x)|^pv_{\vec{w}}(x)dx=0
\end{equation*}
holds whenever $f\in F, g\in G$.

{\bf Step III.} $\mathcal{F}$ satisfies condition
$\rm (iii)
$.

It remains to show that the set $\mathcal{F}$ is uniformly equicontinuous.
It suffices to verify that for any $0<\epsilon<1/4$, if $|h|$ is sufficiently small and dependents only on $\epsilon$, then 
\begin{equation}\label{3.10}
\|T^*_{\eta,b,1}(f,g)(h+\cdot)-T^*_{\eta,b,1}(f,g)(\cdot)\|_{L^p(v_{\vec{w}})}\leq C\epsilon,
\end{equation}
holds uniformly for $f\in F,g\in G$.

In what follows, we fix $\eta\in(0,\frac{1}{16})$ and $|h|<\frac{\epsilon\eta}{4}$. Denote $K_{\eta,\delta}(x,y,z)=k_\eta(x,y,z)\chi_{\{|x-y|+|x-z|>\delta\}}$. Then, we have the following decomposition
\begin{align}\label{3.11}
\begin{split}
&|T^*_{\eta,b,1}(f,g)(x+h)-T^*_{\eta,b,1}(f,g)(x)|\\
&\leq\displaystyle \sup_{\delta>0}\Big|\int_{\r2n}K_{\eta,\delta}(x+h,y,z)(b(x+h)-b(y))f(y)g(z)dydz\\
&\quad-\int_{\r2n}K_{\eta,\delta}(x,y,z)(b(x)-b(y))f(y)g(z)dydz\Big|\\
&\leq\displaystyle \sup_{\delta>0}\Big|\int_{\r2n}K_{\eta,\delta}(x,y,z)(b(x+h)-b(x))f(y)g(z)dydz\Big|\\
&\quad+\displaystyle\sup_{\delta>0}\Big|\int_{\r2n}(K_{\eta,\delta}(x+h,y,z)-K_{\eta,\delta}(x,y,z))(b(x+h)-b(y))f(y)g(z)dydz\Big|\\
&=:I_1(x)+I_2(x).
\end{split}
\end{align}
	
For $I_1(x)$, it holds that
\begin{equation*}
\begin{array}{ll}
&I_1(x)=\displaystyle|b(x+h)-b(x)|\sup_{\delta>0}\Big|\iint_{|x-y|+|x-z|>\delta}K_{\eta}(x,y,z)f(y)g(z)dydz\Big|\\
&\qquad\leq C\displaystyle |h|T^*_{\eta}(f,g)(x)
\end{array}
\end{equation*}
Then, by the boundedness of $T^*_{\eta}(f,g)$, we have
\begin{equation}\label{3.12}
\|I_1\|_{L^p(v_{\vec{w}})}\leq C|h|\|f\|_{L^{p_1}(w_1)}\|g\|_{L^{p_2}(w_2)}\leq C|h|\leq C\epsilon.
\end{equation}
	
Next we will estimate $I_2(x)$. When $|x-y|+|x-z|<\frac{\eta}{4}$ and $|h|<\frac{\epsilon\eta}{4}<\frac{\eta}{16}$, then $|x+h-y|+|x+h-z|<\frac{3\eta}{8}$.
Hence
$$\varphi(\frac{2}{\eta}(|x+h-y|+|x+h-z|))=1=\varphi(\frac{2}{\eta}(|x-y|+|x-z|)).$$
This implies
$$K_\eta(x+h,y,z)=0=K_\eta(x,y,z).$$
Then, for $I_{2}(x)$, we decompose it as follows:
\begin{align}\label{3.13}
\begin{split}
I_2(x)&\leq\displaystyle \sup_{\delta>0}\Big|\iint_{|x-y|+|x-z|>\frac{\eta}{4}}(K_{\eta}(x+h,y,z)-K_{\eta}(x,y,z))\chi_{\{|x+h-y|+|x+h-z|>\delta\}}\\
&\quad\times(b(x+h)-b(y))f(y)g(z)dydz\Big|\\
&\quad+\sup_{\delta>0}\Big|\iint_{|x-y|+|x-z|>\frac{\eta}{4}}K_{\eta}(x,y,z)(\chi_{\{|x+h-y|+|x+h-z|>\delta\}}-\chi_{\{|x-y|+|x-z|>\delta\}})\\
&\quad\times(b(x+h)-b(y))f(y)g(z)dydz\Big|\\
&=:I_{21}(x)+I_{22}(x).
\end{split}
\end{align}

For $I_{21}(x)$, When $|x-y|+|x-z|\geq\frac{\eta}{4}$ and $|h|<\frac{\epsilon\eta}{4}<\frac{\eta}{16}$, we have $|h|\leq\frac{1}{2}\max\{|x-y|,|x-z|\}$. then, using \eqref{3.7} and splitting into annuli, we obtain
\begin{equation}\label{3.14}
\begin{split}
I_{21}(x)&\leq C\displaystyle \sup_{\delta>0}\iint_{|x-y|+|x-z|>\frac{\eta}{4}}\frac{|h|^\gamma}{(|x-y|+|x-z|)^{2n+\gamma}}|f(y)g(z)|dydz\\
&\leq C|h|^\gamma\eta^{-\gamma}\displaystyle \sup_{\delta>0}\frac{2^{-j\gamma}}{(\frac{\eta}{4}2^j)^{2n}}\sum_{j=0}^{\infty}\iint_{|x-y|+|x-z|\sim\frac{\eta}{4}2^j}|f(y)g(z)|dydz\\
&\leq C|h|^\gamma\eta^{-\gamma}\displaystyle \sup_{\delta>0}\frac{2^{-j\gamma}}{(\frac{\eta}{4}2^j)^{2n}}\sum_{j=0}^{\infty}\int_{Q(x,\frac{\eta}{4}2^j)}|f(y)|dy\int_{Q(x,\frac{\eta}{4}2^j)}|g(z)|dz\\
&\leq C|h|^\gamma\eta^{-\gamma}\mathcal{M}(f,g)(x).
\end{split}
\end{equation}
Then, for any $(f,g)\in{L^{p_1}(w_1)}\times{L^{p_2}(w_2)}$, by the boundedness of $\mathcal{M}$, we have
\begin{equation*}
\|I_{21}\|_{L^p(v_{\vec{w}})}\leq C|h|^\gamma\eta^{-\gamma}\|\mathcal{M}(f,g)\|_{L^p(v_{\vec{w}})}\leq C|h|^\gamma\eta^{-\gamma}\|f\|_{L^{p_1}(w_1)}\|g\|_{L^{p_2}(w_2)}.
\end{equation*}
Taking $\gamma=|h|^{1/2}$, we get
\begin{equation}\label{3.15}
\|I_{21}\|_{L^p(v_{\vec{w}})}\leq C|h|^{\gamma/2}\leq C\epsilon.
\end{equation}
For $I_{22}(x)$, we have
\begin{align}\label{3.16}
\begin{split}
I_{22}(x)&\leq \sup_{\delta>0}\Big|\iint_{{|x-y|+|x-z|>\frac{\eta}{4}\atop|x-y|+|x-z|\leq\delta}\atop
|x+h-y|+|x+h-z|>\delta}K_{\eta}(x,y,z)(b(x+h)-b(y))f(y)g(z)dydz\Big|\\
&\quad+ \sup_{\delta>0}\Big|\iint_{{|x-y|+|x-z|>\frac{\eta}{4}\atop|x-y|+|x-z|>\delta}\atop
|x+h-y|+|x+h-z|\leq\delta}K_{\eta}(x,y,z)(b(x+h)-b(y))f(y)g(z)dydz\Big|\\
&=:I_{22}^1(x)+I_{22}^2(x).
\end{split}
\end{align}

For$I_{22}^1(x)$, since $|h|<\frac{\epsilon\eta}{4}, 0<\epsilon<\frac{1}{4}$ and $|x-y|+|x-z|>\frac{\eta}{4},\ |x-y|+|x-z|\leq\delta,\ |x+h-y|+|x+h-z|>\delta$, then $|x-y|+|x-z|>\epsilon^{-1}|h|$ and $|x-y|+|x-z|\geq\frac{\delta}{1+2\epsilon}$. Therefore, for any $1<r<\min\{p_1,p_2\}$, by \eqref{3.7}, lemma \ref{lem2.3} and H\"older inequality, we get

\begin{align*}
\begin{split}
I_{22}^1(x)&\leq C\|b\|_\infty \sup_{\delta>0}\iint_{\frac{\delta}{1+2\epsilon}\leq|x-y|+|x-z|\leq\delta}\frac{|f(y)g(z)|}{(|x-y|+|x-z|)^{2n}}dydz\\
&\leq C \sup_{\delta>0}\Big(\iint_{\frac{\delta}{1+2\epsilon}\leq|x-y|+|x-z|\leq\delta}\frac{|f(y)g(z)|^r}{(|x-y|+|x-z|)^{2n}}dydz\Big)^{\frac{1}{r}}\\
&\quad\times\Big(\iint_{\frac{\delta}{1+2\epsilon}\leq|y|+|z|\leq\delta}\frac{dydz}{(|y|+|z|)^{2n}}dydz\Big)^{\frac{1}{r'}}\\
&\leq C \sup_{\delta>0}\Big((1+2\epsilon)^{2n}\delta^{-2n}\iint_{\frac{\delta}{1+2\epsilon}\leq|x-y|+|x-z|\leq\delta}|f(y)g(z)|^rdydz\Big)^{\frac{1}{r}}[1-(1+2\epsilon)^{-n}]^{\frac{1}{r'}}\\
&\leq C(1+2\epsilon)^{2n}[1-(1+2\epsilon)^{-n}]^{\frac{1}{r'}}\sup_{\delta>0}\Big(\delta^{-2n}\int_{Q(x,\delta)}|f(y)|^rdy\int_{Q(x,\delta)}|g(z)|^rdz\Big)^{\frac{1}{r}}\\
&\leq C\epsilon^{1/r'}[\mathcal{M}(|f|^r,|g|^r)(x)]^{\frac{1}{r}}.
\end{split}
\end{align*}
Notice that $(f, g)\in L^{p_1}(w_1)\times L^{p_2}(w_2)$, $\frac{1}{p}=\frac{1}{p_1}+\frac{1}{p_2}$, then $(|f|^r,|g|^r)\in L^{p_1/r}(w_1)\times L^{p_2/r}(w_2)$, $\frac{r}{p}=\frac{r}{p_1}+\frac{r}{p_2}$. Thus,  the boundedness of $\mathcal{M}$ gives that

\begin{align}\label{3.17}
\begin{split}
\|I_{22}^1\|_{L^p(v_{\vec{w}})}&\leq C\epsilon^{1/r'}\Big(\int_{\rn}|\mathcal{M}(|f|^r,|g|^r)(x)|^{\frac{p}{r}}v_{\vec{w}}(x)dx\Big)^{\frac{1}{p}}\\
&=C\epsilon^{1/r'}\|\mathcal{M}(|f|^r,|g|^r)\|_{L^{p/r}(v_{\vec{w}})}^{1/r}\\
&\leq C\epsilon^{1/r'}\||f|^r\|_{L^{p_1/r}(w_1)}^{1/r}\||g|^r\|_{L^{p_2/r}(w_2)}^{1/r}\\
&=C\epsilon^{1/r'}\|f\|_{L^{p_1}(w_1)}\|g\|_{L^{p_2}(w_2)}\\
&\leq C\epsilon.
\end{split}
\end{align}

For$I_{22}^2(x)$, since $|h|<\frac{\epsilon\eta}{4}, 0<\epsilon<\frac{1}{4}$ and $|x-y|+|x-z|>\frac{\eta}{4},\ |x-y|+|x-z|>\delta,\ |x+h-y|+|x+h-z|\leq\delta$, then $|x-y|+|x-z|>\epsilon^{-1}|h|$ and $|x-y|+|x-z|\leq\frac{\delta}{1-2\epsilon}$. Then,for any $1<r<\min\{p_1,p_2\}$, by H\"older inequality, \eqref{3.7} and lemma \ref{lem2.3}, we get
\begin{align*}
\begin{split}
I_{22}^2(x)&\leq C\|b\|_\infty \sup_{\delta>0}\iint_{\delta\leq|x-y|+|x-z|\leq\frac{\delta}{1-2\epsilon}}\frac{|f(y)g(z)|}{(|x-y|+|x-z|)^{2n}}dydz\\
&\leq C \sup_{\delta>0}\Big(\iint_{\delta\leq|x-y|+|x-z|\leq\frac{\delta}{1-2\epsilon}}\frac{|f(y)g(z)|^r}{(|x-y|+|x-z|)^{2n}}dydz\Big)^{\frac{1}{r}}\\
&\quad\times\Big(\iint_{\delta\leq|y|+|z|\leq\frac{\delta}{1-2\epsilon}}\frac{dydz}{(|y|+|z|)^{2n}}dydz\Big)^{\frac{1}{r'}}\\
&\leq C \sup_{\delta>0}\Big(\delta^{-2n}\iint_{\delta\leq|x-y|+|x-z|\leq\frac{\delta}{1-2\epsilon}}|f(y)g(z)|^rdydz\Big)^{\frac{1}{r}}[(1-2\epsilon)^{-n}-1]^{\frac{1}{r'}}\\
&\leq C\epsilon^{1/r'}[\mathcal{M}(|f|^r,|g|^r)(x)]^{\frac{1}{r}}.
\end{split}
\end{align*}
Similar to \eqref{3.17}, it holds that
\begin{equation}\label{3.18}
\|I_{22}^2\|_{L^p(v_{\vec{w}})}\leq C\epsilon,
\end{equation}
which together with \eqref{3.12}, \eqref{3.15} and \eqref{3.17} yields \eqref{3.10}. This finishes the proof of Theorem \ref{thm1.1}.
\end{proof}

\begin{proof}[Proof of Theorem \ref{thm1.2}]We still use smooth truncated techniques to prove Theorem \ref{thm1.2}. Similarly as in the proof of Theorem \ref{thm1.1}, we define
\begin{equation*}
T_{\eta,\vec{b}}^*(f,g)(x)=\sup_{\delta>0}\Big|\iint_{|x-y|+|x-z|>\delta}K_\eta(x,y,z)(b_1(x)-b_1(y))(b_2(x)-b_2(y))f(y)g(z)dydz\Big|.
\end{equation*}	 Then, for any $\vec{b}\in\mathcal{C}_c^\infty(\rn)\times\mathcal{C}_c^\infty(\rn)$, it is easy to get that
\begin{align*}
\begin{split}
&|T_{\vec{b}}^*(f,g)(x)-T_{\eta,\vec{b}}^*(f,g)(x)|\\
&\leq \|\nabla b_1\|_\infty\|\nabla b_2\|_\infty\displaystyle\sup_{\delta>0}\iint_{\delta<|x-y|+|x-z|\leq\eta}\frac{|x-y||x-z||f(y)g(z)|}{(|x-y|+|x-z|)^{2n}}dydz\\
&\leq C\eta^2\displaystyle\sup_{\delta>0}\sum_{j=0}^{\infty}\frac{2^{-2j}}{(2^{-j-1}\eta)^{2n}}\iint_{2^{-j-1}\eta<|x-y|+|x-z|\leq2^{-j}\eta}|f(y)g(z)|dydz\\
&\leq C\eta^2\displaystyle\sup_{\delta>0}\sum_{j=0}^{\infty}\frac{2^{-2j}}{(2^{-j}\eta)^{2n}}\int_{Q(x,2^{-j}\eta)}|f(y)|dy\int_{Q(x,2^{-j}\eta)}|g(z)|dz\\
&\leq C\eta^2\mathcal{ M}(f,g)(x).
\end{split}
\end{align*}
Therefore, using the boundedness of $\mathcal{ M}$ from $L^{p_1}(w_1)\times L^{p_2}(w_2)$ to $L^p(v_{\vec{w}})$, we can obtain
\begin{equation*}
\|T_{\vec{b}}^*(f,g)-T_{\eta,\vec{b}}^*(f,g)\|_{L^p(v_{\vec{w}})}\leq C\eta^2\|f\|_{L^p(w)},
\end{equation*}
which implies that
\begin{equation*}
\lim_{\eta\rightarrow0}\|T_{\vec{b}}^*(f,g)-T_{\eta,\vec{b}}^*(f,g)\|_{L^p(w)}=0.
\end{equation*}

On the other hand, if $\vec{b}\in\CMO(\rn)\times\CMO(\rn)$, then for any $\epsilon>0$, there exists $\vec{b}^\epsilon=(b_1^\epsilon,b_2^\epsilon)\in\mathcal{C}_c^\infty(\rn)\times\mathcal{C}_c^\infty(\rn)$ such that $\|b_j-b_j^\epsilon\|_{\BMO(\rn)}<\epsilon,\ j=1,2$, so that
\begin{align*}
\begin{split}
\|T_{\vec{b}}^*(f,g)-T_{\vec{b}^\epsilon}^*(f,g)\|_{L^p(v_{\vec{w}})}
&\leq\|T_{(b_1-b_1^\epsilon,b_2)}^*(f,g)\|_{L^p(v_{\vec{w}})}+\|T_{(b_1,b_2-b_2^\epsilon)}^*(f,g)\|_{L^p(v_{\vec{w}})}\\
&\leq C\|b_2\|_{\BMO(\rn)}\|b_1-b_1^\epsilon\|_{\BMO(\rn)}\|f\|_{L^{p_1}(w_1)}\|g\|_{L^{p_2}(w_2)}\\
&\quad+C\|b_1\|_{\BMO(\rn)}\|b_2-b_2^\epsilon\|_{\BMO(\rn)}\|f\|_{L^{p_1}(w_1)}\|g\|_{L^{p_2}(w_2)}\\
&\leq C\epsilon.
\end{split}
\end{align*}
Thus, to prove $T_{\vec{b}}^*$ is compact on 
$L^p(v_{\vec{w}})$ for any $\vec{b}\in\CMO(\rn)\times\CMO(\rn)$, we only need to show that $T_{\eta,\vec{b}}^*$ is compact for any $\vec{b}\in\mathcal{C}_c^\infty(\mathbb{R}^n)\times\mathcal{C}_c^\infty(\mathbb{R}^n)$ when $\eta>0$ is small enough. For arbitrary bounded sets $F\subset L^{p_1}(w_1)$ and $G\subset L^{p_2}(w_2)$, let 
$$\mathcal{G}=\{T_{\eta,\vec{b}}^*(f,g):f\in F,\ g\in G\}.$$
Then, we  shall prove that for any $\vec{b}\in\mathcal{C}_c^\infty(\mathbb{R}^n)\times\mathcal{C}_c^\infty(\mathbb{R}^n)$, $\mathcal{G}$ satisfies the conditions$\rm (i)$-$\rm(iii)$ of Lemma \ref{lem2.4}.

Similar to the proof of Theorem \ref{thm1.1}, we can get \rm(i) holds easily. 

Assume $b_j\in\mathcal{C}_c^\infty(\mathbb{R}^n)$ and $\supp(b_j)\subset B(0,R)$,\ $j=1,2$ , where $B(0,R)$ is the ball of radius $R>1$ center at origin in $\mathbb{R}^n$. For any $|x| > N > 2R$, $\vec{w}\in A_{\vec{p}}$, by H\"older inequality, we have
\begin{align*}
\begin{split}
T_{\eta,\vec{b}}^*(f,g)(x)&\leq C\int_{B(0,R)}\int_{B(0,R)}\frac{|b_1(y)|b_2(z)|}{(|x-y|+|x-z|)^{2n}}|f(y)||g(z)|dydz\\
&\leq C\|b_1\|_\infty\|b_2\|_\infty|x|^{-2n}\int_{B(0,R)}\int_{B(0,R)}|f(y)||g(z)|dydz\\
&\leq C|x|^{-2n}\|f\|_{L^{p_1}(w_1)}\|g\|_{L^{p_2}(w_2)}\Big(\int_{B(0,R)}w_1^{-p_1'/p_1}(y)dy\Big)^{\frac{1}{p_1'}}\Big(\int_{B(0,R)}w_1^{-p_2'/p_2}(y)dy\Big)^{\frac{1}{p_2'}}.
\end{split}
\end{align*}
 Thus, it follows that
\begin{equation*}
\int_{|x|>N}|T_{\eta,\vec{b}}^*(f,g)(x)|^pv_{\vec{w}}(x)dx\leq C\int_{|x|>N}\frac{v_{\vec{w}}(x)}{|x|^{2np}}dx,
\end{equation*}
Since $\vec{w}\in A_{\vec{p}}$, $\frac{1}{2}<p<\infty$, then $1<2p<\infty$ and $v_{\vec{w}}\in A_{2p}(\rn)$ (see \cite{LOPTT}), which together with Lemma \ref{lem2.1} (ii) yields that
\begin{equation*}
\lim\limits_{A\rightarrow+\infty}\int_{|x|>A}|T_{\eta,\vec{b}}^*(f,g)(x)|^pv_{\vec{w}}(x)dx=0,
\end{equation*}
 whenever $f\in F$ and $g\in G$.

It remains to show that the set $\mathcal{F}$ is uniformly equicontinuous.
It suffices to verify that for any $0<\epsilon<1/4$, if $|h|$ is sufficiently small and dependent only on $\epsilon$, then 
\begin{equation}\label{3.19}
\|T_{\eta,\vec{b}}^*(f,g)(h+\cdot)-T_{\eta,\vec{b}}^*(f,g)(\cdot)\|_{L^p(v_{\vec{w}})}\leq C\epsilon,
\end{equation}
holds uniformly for $f\in F,g\in G$.

 Fix $\eta\in(0,\frac{1}{16})$ and $|h|<\frac{\epsilon\eta}{4}$. Denote $K_{\eta,\delta}(x,y,z)=K_\eta(x,y,z)\chi_{\{|x-y|+|x-z|>\delta\}}$ and $a(x,y,z)=(b_1(x)-b_1(y))(b_2(x)-b_2(y))$, $a_h(x,y,z)=a(x+h,y,z)-a(x,y,z)$. Then
\begin{align}\label{3.20}
\begin{split}
&|T_{\eta,\vec{b}}^*(f,g)(x+h)-T_{\eta,\vec{b}}^*(f,g)(x)|\\
&\leq\displaystyle \sup_{\delta>0}\Big|\int_{\r2n}K_{\eta,\delta}(x+h,y,z)a(x+h,y,z)f(y)g(z)dydz\\
&\quad-\int_{\r2n}K_{\eta,\delta}(x,y,z)a(x,y,z)f(y)g(z)dydz\Big|\\
&\leq\displaystyle \sup_{\delta>0}\Big|\int_{\r2n}K_{\eta,\delta}(x,y,z)a_h(x,y,z)f(y)g(z)dydz\Big|\\
&\quad+\displaystyle\sup_{\delta>0}\Big|\int_{\r2n}(K_{\eta,\delta}(x+h,y,z)-K_{\eta,\delta}(x,y,z))a(x+h,y,z)f(y)g(z)dydz\Big|\\
&=:J_1(x)+J_2(x).
\end{split}
\end{align}

For $J_1(x)$, notice that
\begin{align*}
\begin{split}
a_h(x,y,z)=&(b_1(x+h)-b_1(x))(b_2(x+h)-b_2(x))+(b_1(x+h)-b_1(x))(b_2(x)-b_2(z))\\
&+(b_1(x)-b_1(y))(b_2(x+h)-b_2(x)).
\end{split}
\end{align*}
Then, we obtain
\begin{align*}
\begin{split}
J_1(x)&\leq\displaystyle|h|^2\|\nabla b_1\|_\infty\|\nabla b_2\|_\infty\sup_{\delta>0}\Big|\iint_{|x-y|+|x-z|>\delta}K_{\eta}(x,y,z)f(y)g(z)dydz\Big|\\
&\quad+|h|\|\nabla b_1\|_\infty\sup_{\delta>0}\Big|\iint_{|x-y|+|x-z|>\delta}K_{\eta}(x,y,z)(b_2(x)-b_2(z))f(y)g(z)dydz\Big|\\
&\quad+|h|\|\nabla b_2\|_\infty\sup_{\delta>0}\Big|\iint_{|x-y|+|x-z|>\delta}K_{\eta}(x,y,z)(b_1(x)-b_1(y))f(y)g(z)dydz\Big|\\
&\leq C|h|^2T_\eta^*(f,g)(x)+C|h|T_{\eta,b_2,2}^*(f,g)(x)+C|h|T_{\eta,b_1,1}^*(f,g)(x).
\end{split}
\end{align*}
By the weighted boundedness of $T^*_{\eta}(f,g)$ and $T_{\eta,b_j,j}^*,\ j=1,2$, we have
\begin{equation}\label{3.21}
\|J_1\|_{L^p(v_{\vec{w}})}\leq C(|h|^2+|h|)\|f\|_{L^{p_1}(w_1)}\|g\|_{L^{p_2}(w_2)}.
\end{equation}

For $J_2(x)$. When $|x-y|+|x-z|<\frac{\eta}{4}$ and $|h|<\frac{\epsilon\eta}{4}<\frac{\eta}{16}$, then $|x+h-y|+|x+h-z|<\frac{3\eta}{8}$.
Hence
$\varphi(\frac{2}{\eta}(|x+h-y|+|x+h-z|))=1=\varphi(\frac{2}{\eta}(|x-y|+|x-z|)).$
This implies$$K_\eta(x+h,y,z)=K_\eta(x,y,z)=0.$$ Then, we decompose it as follows:
\begin{align}\label{3.22}
\begin{split}
J_2(x)&\leq\displaystyle \sup_{\delta>0}\Big|\iint_{|x-y|+|x-z|>\frac{\eta}{4}}(K_{\eta}(x+h,y,z)-K_{\eta}(x,y,z))\chi_{\{|x+h-y|+|x+h-z|>\delta\}}\\
&\quad\times a(x+h,y,z)f(y)g(z)dydz\Big|\\
&\quad+\sup_{\delta>0}\Big|\iint_{|x-y|+|x-z|>\frac{\eta}{4}}K_{\eta}(x,y,z)(\chi_{\{|x+h-y|+|x+h-z|>\delta\}}-\chi_{\{|x-y|+|x-z|>\delta\}})\\
&\quad\times a(x+h,y,z)f(y)g(z)dydz\Big|\\
&=:J_{21}(x)+J_{22}(x).
\end{split}
\end{align}

	For $J_{21}(x)$, when $|x-y|+|x-z|\geq\frac{\eta}{4}$ and $|h|<\frac{\epsilon\eta}{4}<\frac{\eta}{16}$, we have $|h|\leq\frac{1}{2}\max\{|x-y|,|x-z|\}$. Then, using \eqref{3.7} and splitting into annuli, we obtain
\begin{equation*}
\begin{split}
J_{21}(x)&\leq C\displaystyle \sup_{\delta>0}\iint_{|x-y|+|x-z|>\frac{\eta}{4}}\frac{|h|^\gamma}{(|x-y|+|x-z|)^{2n+\gamma}}|f(y)g(z)|dydz\\
&\leq C|h|^\gamma\eta^{-\gamma}\displaystyle \sup_{\delta>0}\frac{2^{-j\gamma}}{(\frac{\eta}{4}2^j)^{2n}}\sum_{j=0}^{\infty}\iint_{|x-y|+|x-z|\sim\frac{\eta}{4}2^j}|f(y)g(z)|dydz\\
&\leq C|h|^\gamma\eta^{-\gamma}\mathcal{M}(f,g)(x).
\end{split}
\end{equation*}
Then, for any $(f,g)\in{L^{p_1}(w_1)}\times{L^{p_2}(w_2)}$, by the boundedness of $\mathcal{M}$, we have
\begin{equation*}
\|J_{21}\|_{L^p(v_{\vec{w}})}\leq C|h|^\gamma\eta^{-\gamma}\|\mathcal{M}(f,g)\|_{L^p(v_{\vec{w}})}\leq C|h|^\gamma\eta^{-\gamma}\|f\|_{L^{p_1}(w_1)}\|g\|_{L^{p_2}(w_2)}.
\end{equation*}
Taking $\gamma=|h|^{1/2}$, we get
\begin{equation}\label{3.23}
\|J_{21}\|_{L^p(v_{\vec{w}})}\leq C|h|^{\gamma/2}\leq C\epsilon.
\end{equation}

For $J_{22}(x)$, we have
\begin{align*}
\begin{split}
I_{22}(x)&\leq \sup_{\delta>0}\Big|\iint_{{|x-y|+|x-z|>\frac{\eta}{4}\atop|x-y|+|x-z|\leq\delta}\atop
	|x+h-y|+|x+h-z|>\delta}K_{\eta}(x,y,z)a(x+h,y,z)f(y)g(z)dydz\Big|\\
&\quad+ \sup_{\delta>0}\Big|\iint_{{|x-y|+|x-z|>\frac{\eta}{4}\atop|x-y|+|x-z|>\delta}\atop
	|x+h-y|+|x+h-z|\leq\delta}K_{\eta}(x,y,z)a(x+h,y,z)f(y)g(z)dydz\Big|\\
&=:J_{22}^1(x)+J_{22}^2(x).
\end{split}
\end{align*}
Analogous to the estimates of $I_{22}^1(x)$ and $I_{22}^2(x)$, we can obtain
\begin{align*}
\begin{split}
J_{22}^1(x)&\leq C\|b_1\|_\infty\|b_2\|_\infty \sup_{\delta>0}\iint_{\frac{\delta}{1+2\epsilon}\leq|x-y|+|x-z|\leq\delta}\frac{|f(y)g(z)|}{(|x-y|+|x-z|)^{2n}}dydz\\
&\leq C \sup_{\delta>0}\Big((1+2\epsilon)^{2n}\delta^{-2n}\iint_{\frac{\delta}{1+2\epsilon}\leq|x-y|+|x-z|\leq\delta}|f(y)g(z)|^rdydz\Big)^{\frac{1}{r}}[1-(1+2\epsilon)^{-n}]^{\frac{1}{r'}}\\
&\leq C\epsilon^{1/r'}[\mathcal{M}(|f|^r,|g|^r)(x)]^{\frac{1}{r}}.
\end{split}
\end{align*}
and
\begin{align*}
\begin{split}
J_{22}^2(x)&\leq C\|b_1\|_\infty\|b_2\|_\infty \sup_{\delta>0}\iint_{\delta\leq|x-y|+|x-z|\leq\frac{\delta}{1-2\epsilon}}\frac{|f(y)g(z)|}{(|x-y|+|x-z|)^{2n}}dydz\\
&\leq C \sup_{\delta>0}\Big(\delta^{-2n}\iint_{\delta\leq|x-y|+|x-z|\leq\frac{\delta}{1-2\epsilon}}|f(y)g(z)|^rdydz\Big)^{\frac{1}{r}}[(1-2\epsilon)^{-n}-1]^{\frac{1}{r'}}\\
&\leq C\epsilon^{1/r'}[\mathcal{M}(|f|^r,|g|^r)(x)]^{\frac{1}{r}}.
\end{split}
\end{align*}
Therefore, we have
\begin{equation}
\|J_{22}^1\|_{L^p(v_{\vec{w}})}\leq C\epsilon,\ \ \ \ \ \ \ |J_{22}^2\|_{L^p(v_{\vec{w}})}\leq C\epsilon,
\end{equation}
which together with \eqref{3.21}  and \eqref{3.23} yields \eqref{3.19} and completes the proof of Theorem \ref{thm1.2}.

\end{proof}

\end{document}